\def\diag{\mathop{\rm diag}\nolimits}
\newtheorem{thm}{Theorem}[section]
\newtheorem{lem}{Lemma}[section]
\newtheorem{prop}{Proposition}[section]
\newtheorem{coro}{Corollary}[section]
\newtheorem{defn}[lem]{Definition}
\numberwithin{equation}{section}
\newtheorem{remark}{Remark}[section]
\def\R{\mathbb R}
\def\Z{\mathbb Z}
\def\mathscr{\mathcal }
\def\diag{\text{diag}}
\newcommand{\btau}{{\boldsymbol{\tau}}}
\newcommand{\bomega}{{\boldsymbol{\omega}}}
\newcommand{\bsigma}{{\boldsymbol{\sigma}}}
\newcommand{\ba}{{\mathbf{a}}}
\newcommand{\bd}{{\mathbf{d}}}
\newcommand{\bi}{{\mathbf{i}}}
\newcommand{\bp}{{\mathbf{p}}}
\newcommand{\bD}{{\mathcal D}}
\newcommand{\SD}{{\mathcal D}}
\newcommand{\SH}{{\mathcal H}}
\newcommand{\SV}{{\mathcal V}}
\begin{document}
\title{Distribution of  $\delta$-connected components of  self-affine sponge of Lalley-Gatzouras  type }

\author{Yan-fang Zhang}
\address{School of Science, Huzhou University, Huzhou, 313000, China;}
\email{03002@zjhu.edu.cn}

\author{yong-qiang yang $^*$ } \address{Department of Mathematics and Statistics, Central China Normal University, Wuhan, 430079, China}
\email{yangyq@mails.ccnu.edu.cn}

\date{\today}
\thanks {The work is supported by the start-up research fund from Huzhou University No. RK21089.}

\thanks{{\bf 2010 Mathematics Subject Classification:}  28A80, 28A78.\\
 {\indent\bf Key words and phrases:}\  self-affine sponge,  maximal power law, component-counting measure.}

\thanks{* The correspondence author.}

\maketitle

\begin{abstract} Let $(E, \rho)$ be a metric space and let $h_E\left( \delta \right)$ be the cardinality of the set of $\delta$-connected components of $E$.
In literature, in case of that   $E$ is a self-conformal set satisfying the open set condition or $E$ is a self-affine Sierpi\'nski sponge,  necessary and sufficient condition is given for the validity of the relation
$$
h_E(\delta)\asymp \delta^{-\dim_B E}, \text{ when }\delta\to 0.
$$
In this paper, we generalize the above result to   self-affine sponges of Lalley-Gatzouras type;
actually   in this case, we show that there exists a Bernoulli measure $\mu$ such that
 for any cylinder $R$, it holds that
$$
h_R(\delta)\asymp \mu(R) \delta^{-\dim_B E}, \text{ when }\delta\to 0.
$$
\end{abstract}

\section{Introduction}

Let $\left( E,\rho \right)$ be a metric space and let $\delta >0$. Two points $x,y\in E$ are said to be
$\delta$-equivalent if there exists a sequence $\left\{ x_1=x,x_2,...,x_{k-1},x_k=y \right\} \subset E$ such that $\rho \left( x_i,x_{i+1} \right) \le \delta$ for $1\le i\le k-1$. A $\delta$-equivalent class of $E$ is called a \emph{$\delta$-connected component} of $E$. We denote by $h_E\left( \delta \right)$ the cardinality of the set of $\delta$-connected components of $E$.

A notion closely related to $\delta$-connected component is the gap sequence of a compact metric space, which has been studied by
   many mathematicians; see for instance, \cite{BT54,Fal95,LP93, RRY08, DWX15}.
Some early works  (\cite{LP93, Fal95, DWX15}) observed that for some totally disconnected
self-similar sets and self-conformal sets $E$,
the gap sequence, which we denote by $\{g_E(n)\}_{n\geq 1}$, is comparable to $n^{-1/\dim_B E}$,
which is written as
$$
g_E(n)\asymp n^{-1/\dim_B E}, \ n\geq 1,
$$
where $\dim_B$ denotes the box dimension.
(Two functions $f,g:X\to {\mathbb R}$ are said to be \emph{comparable}, denoted by $f\asymp g$, if there exists a constant $c>0$ such that $c^{-1}g(x)\le f(x) \le cg(x)$   for all $x\in X$.)

Let $(E,\rho)$ be a compact metric space and let $\gamma>0$.
 It is shown (Miao, Xi and Xiong \cite{MXX17}, Zhang and Huang \cite{ZH22})   that
 $$g_E(n)\asymp n^{-1/\gamma} \Leftrightarrow h_E(\delta)\asymp\delta^{-\gamma}.$$
 Motivated by this relation, Zhang and Huang \cite{ZH22} proposed the following definition.

 \begin{defn}\emph{Let $E$ be a compact metric space.
 We say $E$ satisfies the \emph{power law with index $\gamma$} if $h_E\left( \delta \right) \asymp \delta ^{-\gamma}$; if $\gamma =\dim_BE$, in addition, we say $E$ satisfies the \emph{maximal power law}.}
 \end{defn}


There are many works devoted to the maximal power law of attractors of IFS
(\cite{LP93, Fal95, DWX15,MXX17, LMR20, LR18, HZ22, ZH22}).
An \emph{iterated function system} (IFS) is a  family of contractions $\Phi=\{\varphi_j\}_{j=1}^N$  on a metric space $X$. In this paper, we will always assume that
   all $\varphi_j$ are injections. The \emph{attractor} of the IFS is the unique nonempty compact set $E$ satisfying
$E=\bigcup_{j=1}^N\varphi_j(E)$; especially, it is called a \emph{self-similar set} if all $\varphi_j$ are similitudes.
An IFS $\{\varphi_j\}_{j=1}^N$ is said to satisfy the \emph{open set condition} (OSC), if there is a bounded nonempty open set $U\subset\mathbb{R}^{d}$ such that for all $1\le i\le N$, $\varphi_i(U)\subset U$ and $\varphi_i(U)\cap\varphi_j(U)=\emptyset$ for $1\le i\ne j\le N$;
moreover, if $U\cap E\neq \emptyset$, then we say $E$ satisfies the \emph{strong open set condition} (SOSC) and we call $U$ a \emph{feasible strong open set} for $E$.

Lapidus, Pomerance   \cite{LP93} and Falconer \cite{Fal95}  confirmed  maximal power law for self-similar sets $E\subset \R$ satisfying the  strong separation  condition, and
Deng, Wang and Xi \cite{DWX15} generalized this result to the self-conformal sets.
Recently,  Huang and Zhang \cite{HZ22} gave a complete answer
in case of the
self-conformal set satisfying the open set condition.

A point $x\in E$ is called a trivial point of $E$ if $ \{ x \}$ is a connected component of $E$.

\begin{prop}[\cite{HZ22}]\label{prop:conformal}  Let $K$ be a $\mathcal{C}^{1+\alpha}$ self-conformal set generated by the IFS $\{S_j\}_{j=1}^N$.
Assume $K$ satisfies the OSC.
Then the following statements are equivalent:

 \emph{(i)} $K$ satisfies the maximal power law.

\emph{(ii)} There exists a strong open set $U$ such that $U$ contains  trivial points  of $K$.

\emph{(iii)} Every strong open set  for $K$ contains trivial points of $K$.

\emph{(iv)} The set of trivial points of $K$ is dense in $K$.
\end{prop}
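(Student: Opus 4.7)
The plan is to establish the cycle $(\text{iv})\Rightarrow(\text{iii})\Rightarrow(\text{ii})\Rightarrow(\text{iv})$ first, and then to connect condition (i) via $(\text{iv})\Rightarrow(\text{i})$ and $(\text{i})\Rightarrow(\text{ii})$. The implications $(\text{iv})\Rightarrow(\text{iii})\Rightarrow(\text{ii})$ are essentially immediate: a dense set meets every nonempty relatively open subset of $K$, and the SOSC guarantees that at least one strong open set exists.

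For $(\text{ii})\Rightarrow(\text{iv})$, I would let $U$ be a strong open set containing a trivial point $x_0\in K$ and propagate triviality into every cylinder $S_\omega(K)$. Since $K$ is compact metric, triviality of $x_0$ is equivalent to the existence of clopen (in $K$) neighborhoods of $x_0$ with arbitrarily small diameter; pick such a clopen $V_0\subset U$ with $\dist(V_0,\R^d\setminus U)>0$. The core sublemma to prove is that $S_\omega(V_0)$ remains clopen in $K$: it is automatically closed, and for relative openness one combines the fact that $S_\omega(V_0)\subset S_\omega(U)$ has positive distance from $\R^d\setminus S_\omega(U)$ with the OSC-based observation that every point of $K\setminus S_\omega(K)$ lies in some $S_{\omega'}(K)\subset\overline{S_{\omega'}(U)}$ ($\omega'\neq\omega$, $|\omega'|=|\omega|$) and hence outside the open set $S_\omega(U)$. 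This makes $S_\omega(x_0)$ a trivial point of $K$; since $\{S_\omega(x_0):\omega\}$ is dense in $K$, (iv) follows.

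For $(\text{iv})\Rightarrow(\text{i})$, the upper bound $h_K(\delta)\le N(K,\delta)\asymp\delta^{-\dim_B K}$ is immediate from the definition of box dimension. For the lower bound, with $V_0$ as above and $\eta_0:=\min\{\dist(V_0,K\setminus V_0),\dist(V_0,\R^d\setminus U)\}>0$, I would invoke the bounded-distortion estimate for $\mathcal{C}^{1+\alpha}$ conformal IFS to produce $\asymp\delta^{-\dim_B K}$ words $\omega$ with $|\varphi_\omega'|\asymp\delta$. For each such $\omega$ the set $S_\omega(V_0)$ has $\diam\asymp\delta$ and $\dist(S_\omega(V_0),K\setminus S_\omega(V_0))\gtrsim\eta_0\delta$: the internal separation comes by conformal scaling of $\dist(V_0,K\setminus V_0)$, and the external separation by the argument of the previous paragraph applied to the clopen set $S_\omega(V_0)$. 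Tuning the comparability constants so that this separation exceeds $\delta$, each $S_\omega(V_0)$ sits inside a distinct $\delta$-component, giving the required lower bound.

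For $(\text{i})\Rightarrow(\text{ii})$, I would argue by contrapositive. If no SOS has a trivial point, then by the already-established $(\text{ii})\Leftrightarrow(\text{iv})$ trivial points are not dense in $K$, and the propagation sublemma forces $K$ itself to contain no trivial points whatsoever. The main obstacle lies here: converting this qualitative structural fact into the quantitative statement $h_K(\delta)=o(\delta^{-\dim_B K})$, rather than just $h_K(\delta)\leq C\delta^{-\dim_B K}$. The strategy I would pursue is to show that absence of trivial points forces each $\delta$-component of $K$ to contain a continuum-like piece whose internal covering grows at a positive rate in $1/\delta$, so that the number of $\delta$-components is comparable to $\delta^{-(\dim_B K-\sigma)}$ for some $\sigma>0$; this would be made rigorous via the gap-sequence equivalence $h_E(\delta)\asymp\delta^{-\gamma}\Leftrightarrow g_E(n)\asymp n^{-1/\gamma}$ recalled in the introduction, and would contradict (i).
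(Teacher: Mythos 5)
The paper does not prove this proposition: it is imported verbatim from \cite{HZ22}, so there is no in-house argument to compare against. Judged on its own merits, your cycle $(\mathrm{iv})\Rightarrow(\mathrm{iii})\Rightarrow(\mathrm{ii})\Rightarrow(\mathrm{iv})$ and the implication $(\mathrm{iv})\Rightarrow(\mathrm{i})$ are sound in outline; the clopen-propagation sublemma (using $\overline{S_{\omega'}(U)}\cap S_\omega(U)=\emptyset$ together with bounded distortion to keep $S_\omega(V_0)$ uniformly separated from the rest of $K$) and the stopping-time construction of $\asymp\delta^{-s}$ pairwise $\delta$-separated copies of $V_0$ are exactly the right tools, and the details you omit are routine.

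The genuine gap is in $(\mathrm{i})\Rightarrow(\mathrm{ii})$, and it is twofold. First, your intermediate claim that ``the propagation sublemma forces $K$ itself to contain no trivial points whatsoever'' does not follow: the sublemma only propagates trivial points that already lie \emph{inside} a strong open set, so its contrapositive returns you to $\neg(\mathrm{ii})$, not to the absence of all trivial points. (Indeed, the distinction between ``has trivial points'' and ``has trivial points in a strong open set'' is precisely the subtlety flagged in the remark after Theorem \ref{thm:affine}, and condition (iv) -- density -- rather than mere existence is what the proposition asserts.) Second, and more seriously, your quantitative step fails: nondegenerate connected components need not have diameters bounded below, so you cannot conclude that each $\delta$-component costs $\gtrsim\delta^{-\sigma}$ mesh boxes, and the gap-sequence equivalence $h_E(\delta)\asymp\delta^{-\gamma}\Leftrightarrow g_E(n)\asymp n^{-1/\gamma}$ is a reformulation, not a mechanism for producing the exponent drop. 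The workable argument is the one this paper runs in the affine setting (Lemma \ref{lem:island}, Lemma \ref{lem:boundary-box}, Theorem \ref{thm:no-trivial-point}): no trivial point in the strong open set $U$ means $K$ has no islands, hence every $\delta$-connected component of a cylinder $K_I$ must meet $S_I(K\cap\partial U)$; the union of the level-$m$ cylinders touching $\partial U$ has Gibbs measure at most $CQ^m$ with $Q<1$, so by Ahlfors regularity its $\delta$-box count is at most $CQ^m\delta^{-s}$, and taking $m\asymp\log(1/\delta)$ yields $h_{K_I}(\delta)=O(\delta^{-\chi})$ with $\chi<s$, contradicting the maximal power law. You should replace your ``continuum-like piece'' heuristic with this boundary-shell estimate.
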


There are several works devoted to the maximal power law of Bedford-McMullen carpets\cite{MXX17, LMR20, LR18} and self-affine Sierpi\'nski sponge \cite{ZH22}.
 Let $d\geq 2$ and let
$$ 2\leq n_1\leq  n_2< \dots\leq n_d$$
 be a sequence of  integers. Let $\Gamma=\diag(n_1,\dots, n_d)$ be the $d\times d$ diagonal matrix. Let $\bD=\{\bi_1,\dots,\bi_N\}\subset\prod_{j=1}^d\{0,1,\dots,n_j-1\}$.
  For  $\bi\in\bD$ and $z\in\mathbb{R}^d$, we define $S_{\bi}(z)=\Gamma^{-1}(z+\bi)$.
     The attractor of the IFS $\{S_{\bi}\}_{\bi\in\bD}$ , which is denoted by $E=K(\Gamma,\bD)$,
is called a $d$-dimensional \emph{generalized self-affine Sierpi$\acute{\text{n}}$ski sponge} (see Olsen \cite{Olsen07}). The set $E$ is called a \emph{fractal cube} if $n_1=n_2=\cdots=n_d$;
  it is called a \emph{self-affine Sierpi$\acute{\text{n}}$ski sponge} if
 $n_1<n_2<\cdots<n_d$ (see Kenyon and Peres \cite{KP96}), and
especially when $d=2$, $E$ is called a \emph{Bedford-McMullen carpet}.

We say $E$ is \emph{non-degenerated} if $E$ is not contained in a $(d-1)$-dimensional face
of the cube $[0,1]^d$.
For $1\leq j\leq d$, let $\pi_j:\R^d\to \R^j$ be the projection
\begin{equation}\label{eq:pi-j}
\pi_j(x_1,\dots, x_d)=(x_1,\dots, x_j).
\end{equation}

\begin{prop}[\cite{ZH22}] \label{prop:law} Let $E$ be a non-degenerated self-affine Sierpi$\acute{\text{n}}$ski sponge. Then $E$
satisfies the maximal power law if and only if  $E$ and all $\pi_j(E)(1\le j\le d-1)$ possess trivial points.
\end{prop}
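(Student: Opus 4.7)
My plan is to adapt the approximate-cube construction of Kenyon--Peres to the anisotropic scales of the sponge. For $\delta>0$, define integer levels $k_j = \lceil \log_{n_j}(1/\delta)\rceil$ for $1\le j\le d$; since $n_1<\cdots<n_d$, we have $k_1\ge k_2\ge\cdots\ge k_d$. Call an \emph{approximate $\delta$-cube} any cylinder set of $E$ refined to level $k_j$ along the $j$-th coordinate; by construction it has side length $\asymp \delta$ in every direction. The number of approximate $\delta$-cubes meeting $E$ is, by the Kenyon--Peres formula, comparable to $\delta^{-\dim_B E}$. The upper bound $h_E(\delta)\le C\delta^{-\dim_B E}$ is then immediate: selecting one representative from each $\delta$-connected component yields a $\delta$-separated subset of $E$, whose cardinality is controlled by the $\delta$-packing number of $E$, which is $\asymp \delta^{-\dim_B E}$.

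For the sufficiency (``if'') direction, I would use a trivial point in each $\pi_j(E)$ to manufacture $\delta$-sized gaps in the $j$-th direction. A trivial point of $\pi_j(E)$, unwound to level $k_j$, isolates some level-$k_j$ cylinder of $\pi_j(E)$ from its nearest neighbours by a gap of relative size $\asymp 1$, hence absolute size $\asymp n_j^{-k_j}\asymp\delta$. Pulling these isolated cylinders back to $E$ and combining, for each $j=1,\dots,d-1$, the $j$-th-coordinate gaps coming from $\pi_j(E)$ with the $d$-th-coordinate gaps coming from a trivial point of $E$ itself, a positive proportion of approximate $\delta$-cubes turn out to be simultaneously $\delta$-separated from all their neighbours in every coordinate. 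Such cubes support pairwise distinct $\delta$-connected components of $E$, giving $h_E(\delta)\ge c\delta^{-\dim_B E}$.

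The necessity (``only if'') direction I would prove by contrapositive. If some $\pi_{j_0}(E)$ admits no trivial point, then by compactness there is a uniform lower bound $\eta>0$ on the diameters of the connected components of $\pi_{j_0}(E)$. For $\delta<\eta$, each such component supports a $\delta$-chain of $\asymp (\eta/\delta)^{\dim_B \pi_{j_0}(E)}$ approximate $\delta$-cubes of $\pi_{j_0}(E)$; using the Markovian structure of $\bD$, this chain can be lifted to a $\delta$-chain of approximate $\delta$-cubes in $E$ sharing a common ``tail'' in coordinates $j_0+1,\dots,d$, thereby fusing a positive proportion of the cubes above into a single $\delta$-connected component. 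Counting via the Kenyon--Peres formulas for $E$ and for $\pi_{j_0}(E)$ yields $h_E(\delta)=o(\delta^{-\dim_B E})$; the case that $E$ itself has no trivial point is handled analogously, with the $d$-th coordinate playing the role of $\pi_{j_0}$.

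The main technical obstacle is the mixed-scale bookkeeping. Unlike in the self-similar or fractal-cube case, an approximate $\delta$-cube involves distinct levels $k_1\ge\cdots\ge k_d$ in different coordinates, so deciding whether two cubes are $\delta$-close requires comparing their refinements layer by layer. Converting ``a trivial point exists in $\pi_j(E)$'' into ``a positive proportion of level-$k_j$ cylinders of $\pi_j(E)$ are isolated on every side'' must be arranged compatibly across all $j$, and the lifting step in the necessity direction is delicate: one must verify that $\delta$-chains living in a lower-dimensional projection can actually be realised in $E$ without exceeding the step size. I expect the real work of the proof to lie in this inductive, coordinate-by-coordinate argument and in managing the finite-state combinatorics of $\bD$.
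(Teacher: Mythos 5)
This proposition is quoted from \cite{ZH22} and not re-proved in the present paper, but Sections 4--6 contain the generalization of its proof to Lalley--Gatzouras sponges, so your attempt can be measured against that machinery. Your upper bound $h_E(\delta)=O(\delta^{-\dim_B E})$ is fine. The necessity direction, however, contains a genuine error at the very first step: from ``$\pi_{j_0}(E)$ admits no trivial point'' you infer ``by compactness'' a uniform lower bound $\eta>0$ on the diameters of the connected components of $\pi_{j_0}(E)$. The diameter of the component of $x$ is not lower semicontinuous in $x$, and a compact set can have no singleton components while having components of arbitrarily small diameter (e.g.\ $\{0\}\times[0,1]\cup\bigcup_{n\ge 1}\{1/n\}\times[0,1/n]$), so compactness gives nothing; any such claim would have to exploit the self-affine structure, which you do not do. The mechanism actually used is entirely different: absence of (inner) trivial points is equivalent, via a clopen-set argument, to the absence of islands (Lemma \ref{lem:island}); hence every $\delta$-connected component of $\pi_{j_0}(E)$ meets $\pi_{j_0}(E)\cap\partial[0,1]^{j_0}$, whose box-counting exponent is strictly smaller, yielding $h_{\pi_{j_0}(E)}(\delta)=O(\delta^{-\chi})$ with $\chi<\dim_B\pi_{j_0}(E)$ (Theorem \ref{thm:no-trivial-point}); this drop is then propagated up to $E$ through the projection inequality $h_H(\delta)\le h_{\pi_{d-1}(H)}(\varepsilon)$, valid for cylinders $H$ whose shortest side is below $\delta/2$ (Lemma \ref{lem:grow}). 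Nothing in your sketch plays the role of either step, and your alternative route (fusing chains of approximate cubes) does not obviously produce the required $o(\delta^{-\dim_B E})$ bound.

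In the sufficiency direction the entire difficulty is concentrated in your sentence asserting that ``a positive proportion of approximate $\delta$-cubes turn out to be simultaneously $\delta$-separated from all their neighbours in every coordinate.'' A trivial point of $\pi_j(E)$ does not produce gaps ``in the $j$-th direction''; it produces an island of $\pi_j(E)$, i.e.\ a clopen set isolated in all of the first $j$ coordinates at once, and whether a given approximate $\delta$-cube of $E$ is $\delta$-isolated depends jointly on its symbolic addresses at the distinct levels $k_1\ge\cdots\ge k_d$, so the gaps cannot simply be combined coordinate by coordinate. The established proof instead runs an induction on the number of coordinates: block $E$ into cylinders whose shortest side is $\asymp\delta$ (not into approximate cubes), plant a scaled island in each such cylinder, count the components of the planted island by those of its $(d-1)$-dimensional projection at the same scale $\delta$, discard the boundary components via the measure estimate of Lemma \ref{lem:boundary}, and close the count using the fact that the natural (McMullen/Bernoulli) measure is a box-counting measure; see Theorem \ref{thm:grow-2}. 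You correctly identify this coordinate-by-coordinate bookkeeping as ``the real work,'' but it is precisely the part your proposal leaves undone, so what you have is a plausible plan whose two key counting claims are unproved and, in the necessity case, supported by a false general principle.
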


 The main purpose of the present paper is to prove a stronger version of
Proposition \ref{prop:law} for   Lalley-Gatzouras sponges.
A diagonal self-affine sponge $\Lambda$ is said to be a  \emph{Lalley-Gatzouras sponge} if the generating IFS satisfies a coordinate ordering condition as well as a
neat projection condition (\cite{Lalley92, Das16}).
See Section 3 for the precise definitions.

\begin{remark}\emph{
The diagonal self-affine sponge is an important class of self-affine sets received a lot
of studies in recent years,
Baranski \cite{Baranski07}, Das and Simmons \cite{Das16},
Feng and Wang \cite{FW05}, Mackay \cite{MM11},
Peres \cite{Peres01}
  Banaji and Kolossvary \cite{BK21} on dimension theory,
King \cite{King95}, Jordan and Rams \cite{JR11}, Barrel and Mensi\cite{Bar07},
Olsen \cite{Olsen07}, Reeve\cite{Reeve10} on multifractal analysis,
Li \text{et al } \cite{LLM13}, Rao \textit{et al} \cite{Rao19},
Yang and Zhang \cite{YZ20}, on Lipschitz classification, etc.
The distribution of $\delta$-connected components
 illustrates the metric and topology properties of the self-affine sponges from a new point
of view.
}
\end{remark}

However, this generalization is not trivial. The difficulty comes from the fact that
given a cylinder $E_I$ of a Lalley-Gatzouras type sponge $E$,   the relation between   $h_E(\delta)$ and $h_{E_I}(\delta)$ is unclear.
In this paper, we overcome this difficulty by using
a recent result of  Huang \text{et al} \cite{HRWX} on box-counting measures.

Denote $\Sigma=\{1,\dots, N\}$ and set $\Sigma^*=\bigcup_{n\geq 0} \Sigma^n$.
For $I=i_1\dots i_n\in \Sigma^*$, we call
$E_I=\varphi_I(E)=\varphi_{i_1}\circ\dots\circ\varphi_{i_n}(E)$
an $n$-th \emph{cylinder} of $E$. We call $\delta(z+[0,1]^d)$ a \emph{$\delta$-mesh} box if $z\in \Z^d$.
For $A\subset \R^d$, we use
\begin{equation}\label{eq_N}
N_A(\delta)
\end{equation}
to denote the number of $\delta$-mesh boxes
intersecting $A$. It is well known that
$$\dim_B A=\lim_{\delta\to 0}-\frac{\log N_A(\delta)}{\log \delta}$$
 if the limit exists
(see for instance, \cite{Falconer90}).

For a Lalley-Gatzouras sponge, a special Bernoulli measure, which we will
call the \emph{canonical Bernoulli measure} (see  Section 3 for precise definition), is closely related to the box-dimension ( \cite{Lalley92,HRWX}). Recently, Huang \textit{et al} proved the following  result.

 \begin{prop}\label{prop:sponge} (\cite{HRWX}) Let $\Lambda$ be a
    Lalley-Gatzouras sponge.    Then
 the canonical Bernoulli measure $\mu$  is a \textbf{cylinder box-counting measure} in the sense that,
 there is a constant $M_0>0$ such that for any cylinder $R$ of $\Lambda$, and any $\delta<S(R)$,
 the shortest side of $R$,
 it holds that
 \begin{equation}
M_0^{-1}\mu(R)\delta^{-\dim_B \Lambda}\leq  N_R(\delta)\leq M_0\mu(R)\delta^{-\dim_B \Lambda}.
 \end{equation}
\end{prop}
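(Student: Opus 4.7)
The plan is to adapt the approximation-rectangle argument of Lalley and Gatzouras, localized to the cylinder $R=\varphi_I(\Lambda)$, by exploiting self-affinity: viewed through the map $\varphi_I$, the geometry inside $R$ is a rescaled copy of that of $\Lambda$, so a stopping-tree count performed for $\Lambda$ transplants directly to $R$, with the combinatorial cost of descending into $R$ being the measure weight $\mu(R)$. The coordinate-ordering condition gives $a_{J,1}\le a_{J,2}\le\dots\le a_{J,d}$ for every word $J\in\Sigma^*$, making direction $1$ the fastest-shrinking; this is what makes the nested stopping times below well defined. The neat projection condition will be used to ensure that the projections $\pi_k(\varphi_J(\Lambda))$ of cylinders line up cleanly, powering the induction on dimension.

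Write $\varphi_i(x)=A_ix+t_i$ with $A_i=\diag(a_{i,1},\dots,a_{i,d})$, and for $J=j_1\cdots j_m$ set $a_{J,k}=\prod_\ell a_{j_\ell,k}$ and $p_J=\prod_\ell p_{j_\ell}$, where $(p_i)$ are the canonical Bernoulli weights. For $\delta\in(0,S(R))$ I would introduce, starting from the prefix $I$, nested stopping sets in $\Sigma^*$: at the outer level cut at each minimal descendant $J$ extending $I$ with $a_{J,1}\le\delta$, and within each stopped piece zoom further to resolve direction $2$, then direction $3$, and so on up to direction $d$. This yields a finite partition $\mathcal{T}_\delta(I)$ of the subtree rooted at $I$, whose leaves correspond to sub-cylinders comparable to $\delta$-boxes in every coordinate. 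Each such leaf contributes $O(1)$ many $\delta$-mesh boxes to $N_R(\delta)$, so $N_R(\delta)$ is comparable to a suitably weighted cardinality of $\mathcal{T}_\delta(I)$. Telescoping the layer-by-layer counts through the Lalley--Gatzouras variational identity, precisely the identity for which $(p_i)$ is the maximiser yielding $\dim_B\Lambda$, the weighted cardinality collapses to $p_I\cdot\delta^{-\dim_B\Lambda}=\mu(R)\,\delta^{-\dim_B\Lambda}$; the factor $p_I=\mu(R)$ emerges from the product structure of $\mu$, since every leaf word extends $I$ and $p_{IJ'}=p_I\,p_{J'}$.

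The main obstacle is tracking the mixed stopping times cleanly enough to obtain a constant $M_0$ uniform in both $I$ and $\delta$. The stopping tree depends on both, and one has to argue that, once $I$ is factored out via $\varphi_I^{-1}$, the remaining combinatorics form a faithful (though rescaled) copy of those for the whole sponge, so that the same identity with the same constants recurs. The condition $\delta<S(R)$ is what guarantees that a nontrivial stopping tree exists strictly below $I$, placing us in the asymptotic regime of the variational identity. A secondary subtlety is the induction on coordinate depth: at each level one has to use the neat projection condition to identify the projected cylinder structure with an honest self-similar IFS on the lower-dimensional face, so that the inductive hypothesis delivers constants independent of $I$ and $\delta$. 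Once this uniformity is secured, the two-sided inequality follows, with $M_0$ depending only on the IFS through $\max_i a_{i,d}$, $\min_i a_{i,1}$, and the geometry of the ambient cube.
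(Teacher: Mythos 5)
The paper does not actually prove this proposition: it is imported verbatim from \cite{HRWX} (Huang, Rao, Wen and Xu, \emph{Box-counting measure of metric spaces and its applications}), so there is no in-paper argument to compare yours against. Judged on its own, your sketch follows the route one would expect --- nested stopping times producing approximate cubes, an induction on the coordinates driven by the neat projection condition, and the product structure of the Bernoulli measure supplying the factor $\mu(R)=p_I$ --- and that architecture is sound. Three points need repair before it could count as a proof. First, you have the coordinate ordering backwards relative to the paper's convention: condition \eqref{eq:order} says $\phi'_{a,1}>\cdots>\phi'_{a,d}$, so coordinate $d$ is the \emph{fastest}-shrinking one (this is why $S(R)$ is built from the $d$-th derivatives and why the rest of the paper reduces dimension via $\pi_{d-1}$, the projection that forgets the \emph{last} coordinate). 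Your outer stopping rule must therefore cut when $a_{J,d}\le\delta$ and the induction must descend through $\pi_{d-1},\pi_{d-2},\dots$; as written, your scheme would project away the first coordinate, where the neat projection condition of Definition \ref{def:good} gives you nothing, since it is only assumed for the maps $\Phi_{\{1,\dots,j\}}$. Second, the leaves of the nested stopping construction are not sub-cylinders of $\Lambda$ but approximate cubes (a cylinder intersected with symbolic constraints on the slower coordinates at greater depth), and the claim that each leaf contributes $O(1)$ mesh boxes also requires a bounded-overlap statement; that is where the neat projection condition enters quantitatively, and it deserves more than the single sentence you give it. Third, the canonical weights $(p_\bd)$ are not the maximiser of the Lalley--Gatzouras variational problem (that is the Hausdorff-dimension story); for the box-counting statement they are defined directly by the nested equations \eqref{eq:flag} and \eqref{eq:weight}, and the ``telescoping'' step is precisely the repeated use of \eqref{eq:flag}. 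Writing that computation out is also the only way to verify the uniformity of $M_0$ in $I$ and $\delta$, which you correctly identify as the main obstacle but do not actually establish.
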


Motivated by the above result,  we  propose the following concept.

\begin{defn}[Component-counting measure]
 \emph{ Let $\Lambda$ be the attractor of an IFS $\Phi$,  and let $\mu$ be
  a  finite Borel measure on $\Lambda$.
  If there is a constant $M\geq 1$ such that for any  cylinder $R$ of $\Lambda$, there exists
  a $\delta_0=\delta_0(R)$ such that
$$
M^{-1} \mu \left( R \right) \delta ^{-\dim_B \Lambda} \leq h_R\left( \delta \right) \leq M \mu \left( R \right) \delta ^{-\dim_B \Lambda},
$$
for $\delta <\delta_0$,
then we call  $\mu$ a \emph{component-counting measure} of $\Lambda$  .
}
\end{defn}

\begin{remark}\label{rem:homo}\emph{Clearly, if $\Lambda$ admits a component-counting measure, then not only $\Lambda$, but all its
cylinders satisfies the maximal power law with a uniform constant $M$.
This reflects a kind of homogenity
of $\Lambda$.}
\end{remark}

First, we add one more equivalent statement to the list in Proposition \ref{prop:conformal}.

\begin{thm}\label{thm:conformal} Let $K$ be a ${\mathcal C}^{1+\alpha}$ self-conformal set satisfying the OSC,  and let $\mu$
be the Gibbs measure. Then   $K$ satisfies the maximal power law if and only if
 $\mu$ is a component-counting measure.
\end{thm}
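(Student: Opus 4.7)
The plan is to prove the two implications separately, with the easy direction essentially being an application of the definition, and the substantive content lying in the forward direction.

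For the ``$\Leftarrow$'' direction, if $\mu$ is a component-counting measure, I would apply the defining inequality to the cylinder $R=K$ itself (corresponding to the empty word). Since $\mu$ is a finite Borel measure on $K$ with $\mu(K)$ a positive constant, this immediately yields $h_K(\delta)\asymp \delta^{-\dim_B K}$, i.e.\ the maximal power law.

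For the ``$\Rightarrow$'' direction, assume $h_K(\delta)\asymp \delta^{-s}$ with $s:=\dim_B K$. Let $R=S_I(K)$ be an arbitrary cylinder and write $r_I:=\|S_I'\|_\infty$. I would invoke the two standard pillars of the $\mathcal{C}^{1+\alpha}$-conformal OSC theory: (a) \emph{bounded distortion}, which gives constants $0<\kappa_1\le\kappa_2$ independent of $I$ with
\[
\kappa_1 r_I\,\rho(x,y)\;\le\;\rho\bigl(S_I(x),S_I(y)\bigr)\;\le\;\kappa_2 r_I\,\rho(x,y),\qquad x,y\in K;
\]
and (b) the \emph{Gibbs property} for the potential $-s\log|\varphi'|$, which gives $\mu(R)\asymp r_I^{s}$, uniformly in $I$. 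Because $S_I:K\to R$ is a bi-Lipschitz bijection with these distortion bounds, a $\delta$-chain in $R$ pulls back to a chain in $K$ of step $\le \delta/(\kappa_1 r_I)$ and, conversely, a $(\delta/(\kappa_2 r_I))$-chain in $K$ pushes forward to a $\delta$-chain in $R$. Thus
\[
h_K\!\Bigl(\tfrac{\delta}{\kappa_1 r_I}\Bigr)\;\le\; h_R(\delta)\;\le\; h_K\!\Bigl(\tfrac{\delta}{\kappa_2 r_I}\Bigr).
\]
Inserting the maximal power law for $K$ (valid for $\delta/(\kappa_2 r_I)$ below some absolute threshold, i.e.\ for $\delta<\delta_0(R):=\kappa_2 r_I\delta_\ast$) yields $h_R(\delta)\asymp r_I^{s}\,\delta^{-s}$, and combining with (b) gives $h_R(\delta)\asymp \mu(R)\,\delta^{-s}$ with constants that depend only on the uniform constants from (a), (b) and the maximal power law for $K$—hence one uniform $M$ works for every cylinder, as required.

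The main obstacle is essentially bookkeeping: ensuring that the constants $\kappa_1,\kappa_2$ in bounded distortion and the implicit constants in the Gibbs estimate are truly uniform across all words $I\in\Sigma^\ast$, so that the resulting $M$ in the component-counting measure inequality does not drift with $|I|$. Both facts are standard consequences of the $\mathcal{C}^{1+\alpha}$ smoothness together with the OSC (via Schief's theorem / Patzschke's conformal Gibbs construction), so once they are cited the proof reduces to the three-line chain-counting comparison above. No new ingredient analogous to Proposition~\ref{prop:sponge} is needed here, since on a self-conformal set the map $S_I$ really is bi-Lipschitz equivalent to a similarity of ratio $r_I$, collapsing the difficulty that will later appear in the Lalley--Gatzouras case.
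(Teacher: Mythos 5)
Your proposal is correct and follows essentially the same route as the paper: the backward direction is immediate from the definition applied to $R=K$, and the forward direction combines bounded distortion (the paper's Lemma \ref{lem:distortion}, with $|K_I|$ playing the role of your $r_I$, to which it is comparable) with the Ahlfors regularity/Gibbs estimate $\mu(K_I)\asymp |K_I|^s$ (Lemma \ref{lem:Alfors}) and the two-sided chain-counting comparison $h_K(\delta/(\kappa_1 r_I))\le h_R(\delta)\le h_K(\delta/(\kappa_2 r_I))$. The paper writes out only the lower bound explicitly and notes the upper bound is symmetric, but the argument is the same as yours.
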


 A   Lalley-Gatzouras  sponge $\Lambda$    is said to be \emph{degenerated} if $\Lambda$ is contained in a face of $\left[ 0,1 \right] ^d$ with dimension $d-1$.
 From now on, we will always assume that $\Lambda$ is a non-dengerated Lalley-Gatzouras sponge.
Under this assumption,  $\Lambda$ satisfies the strong open set condition with the open set $V=(0,1)^d$. Denote $\Lambda _j=\pi _j\left( \Lambda \right)$
where $\pi_j$ is defined by \eqref{eq:pi-j}; clearly
  $\Lambda_j$  is also non-degenerated, and $V_j=(0,1)^j$ is a feasible strong open set
for $\Lambda_j$. (See Lemma \ref{lem:island}.)
A trivial point $x$ of $\Lambda$ is called an
\emph{inner trivial point} of $\Lambda$ if $x\in (0,1)^d$.

The main result of the present paper is the following two theorems.

\begin{thm}\label{thm:affine} Let $\Lambda$ be a non-degenerated
  Lalley-Gatzouras type sponge, and  let $\mu$ be the canonical Bernoulli measure.
 Then the following statements are equivalent

(i)   $\mu$ is a component-counting measure.

(ii)  $\Lambda$ satisfies the maximal power law.

(iii) $V_j=(0,1)^j$ possesses  trivial points of
$\Lambda_j=\pi_j(\Lambda)$ for every $1\leq j\leq d$.

(iv)  The trivial points of $\Lambda_j$ is dense in
$\Lambda_j$ for every $1\leq j\leq d$.
\end{thm}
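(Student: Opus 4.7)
The plan is to close the cycle \emph{(i)} $\Rightarrow$ \emph{(ii)} $\Rightarrow$ \emph{(iii)} $\Rightarrow$ \emph{(iv)} $\Rightarrow$ \emph{(i)}. The implication \emph{(i)} $\Rightarrow$ \emph{(ii)} is immediate by specializing the defining inequality of a component-counting measure to $R=\Lambda$. For \emph{(ii)} $\Rightarrow$ \emph{(iii)} I argue by contrapositive: if some $V_j$ contains no trivial point of $\Lambda_j$, then using that $\pi_j$ is $1$-Lipschitz and that $\Lambda$ fibers over each $\Lambda_j$, one bounds $h_\Lambda(\delta)$ from above by the product of $h$-factors coming from each projection $\Lambda_1,\dots,\Lambda_d$ (this is the strategy underlying Proposition~\ref{prop:law}); the absence of inner trivial points at level $j$ forces the factor at level $j$ to fall strictly below its maximal order, so the product falls strictly below $\delta^{-\dim_B\Lambda}$. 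For \emph{(iii)} $\Rightarrow$ \emph{(iv)}, each $\Lambda_j$ is itself the attractor of a projected Lalley--Gatzouras IFS satisfying SOSC with open set $V_j$; applying the sub-IFS maps to an inner trivial point of $\Lambda_j$ in $V_j$ produces a trivial point in every cylinder of $\Lambda_j$, giving density.

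The substance lies in \emph{(iv)} $\Rightarrow$ \emph{(i)}. For the upper bound, a standard face-adjacency grouping of the $\delta'$-mesh cubes intersecting $R$ shows $h_R(\delta) \leq N_R(\delta/(2\sqrt{d}))$ for all $\delta>0$, because representative points chosen from a face-adjacent chain of cubes are within $2\sqrt{d}\delta'$ of their neighbours. Combining this with the cylinder box-counting estimate of Proposition~\ref{prop:sponge} yields
\[
h_R(\delta) \;\leq\; M_0\,(2\sqrt{d})^{\dim_B\Lambda}\,\mu(R)\,\delta^{-\dim_B\Lambda}
\]
for all $\delta < S(R)/(2\sqrt{d})$, with constant independent of $R$.

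For the lower bound I would decompose $R$ into the canonical collection of Lalley--Gatzouras ``approximate cubes'' at scale $\delta$, namely sub-cylinders obtained by stopping the IFS at possibly different depths in each coordinate direction; Proposition~\ref{prop:sponge} guarantees that this collection has cardinality $\asymp \mu(R)\delta^{-\dim_B\Lambda}$. In each such sub-cylinder $R_J$ I would then exhibit a ``hierarchically trivial'' point $x_J$, constructed coordinate by coordinate: after fixing the first $j{-}1$ coordinates via a prefix of $J$, the $j$-th coordinate is selected using a trivial point of $\Lambda_j$ whose original gap, once scaled by the accumulated contraction of the prefix in the $j$-th direction, is at least $\delta$. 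Density of trivial points in each $\Lambda_j$ from \emph{(iv)}, together with the coordinate-ordering and neat-projection axioms, guarantees the existence of such a point in every approximate cube; distinct sub-cylinders then yield distinct $\delta$-components of $R$.

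The principal obstacle is the uniformity of the lower-bound construction across all cylinders $R$. The gap of a trivial point of $\Lambda_j$ is a fixed positive constant, yet the contraction accumulated along the $j$-th coordinate of a prefix of $J$ varies from cylinder to cylinder, so the compatibility between the different levels of the tower must be tracked carefully. The neat projection condition, which forces the fibers over each projection to exhibit a product-like structure, is precisely what renders this coordinate-by-coordinate argument compatible with the canonical Bernoulli measure, and it is what allows the resulting lower bound to match the sharp factor $\mu(R)$ appearing in Proposition~\ref{prop:sponge}.
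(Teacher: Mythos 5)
Your skeleton for (i)$\Rightarrow$(ii)$\Rightarrow$(iii) and the equivalence (iii)$\Leftrightarrow$(iv) matches the paper, and your upper bound via face-adjacent mesh cubes plus Proposition~\ref{prop:sponge} is essentially the paper's Lemma~\ref{lem:box}(i). The genuine gap is the lower bound in (iv)$\Rightarrow$(i), which is the entire content of the theorem. Your plan --- one ``hierarchically trivial'' point per approximate cube at scale $\delta$ --- requires, for each approximate cube, a single point $x\in\Lambda$ such that \emph{simultaneously for every $j$} the projection $\pi_j(x)$ is a trivial point of $\Lambda_j$ whose island gap survives the coordinate-wise rescaling down to scale $\delta$. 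Density of trivial points in each $\Lambda_j$ separately does not produce such a point: the trivial points of $\Lambda_{j+1}$ supplied by (iv) project to particular points of $\Lambda_j$ that need not coincide with (or lie in the island of) the trivial point of $\Lambda_j$ chosen at the previous stage, so the tower of choices need not cohere into one point of $\Lambda$. Moreover, the $j$-th coordinate of an approximate cube is already contracted to scale $\approx\delta$, so an island of $\Lambda_j$ placed at that depth has gap $\approx g_0\delta<\delta$ and its component can leak into a neighbouring approximate cube; one must stop at a strictly shallower depth, and how much shallower depends on the letters of the prefix, which is exactly the non-uniformity you name as ``the principal obstacle'' and then leave unresolved. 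This is not a technicality: the introduction states that for Lalley--Gatzouras sponges the relation between $h_E(\delta)$ and $h_{E_I}(\delta)$ is unclear (cylinders of the same generation are not isometric to one another, unlike the Sierpi\'nski sponge case behind Proposition~\ref{prop:law}), and circumventing this is the point of the paper.

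The paper's route avoids the one-shot construction by inducting on the number of coordinates (Theorem~\ref{thm:grow-2}): assuming $\mu_{d-1}$ is already a component-counting measure of $\Lambda_{d-1}$, it covers a cylinder $W$ by the $\delta'$-blocking $\mathcal{V}_{\delta'}$ with $\delta'=\delta/(r_*)^{m_0+m_1}$ much larger than $\delta$, places inside each block $H$ a single scaled island $H_F=f_H(F)$ (Lemma~\ref{lem:island}), and bounds the count from below by the inner $\delta$-components of $\pi_{d-1}(f_H(U))$, using the inductive hypothesis minus the boundary components, which Lemma~\ref{lem:boundary} makes an arbitrarily small fraction; summing $\mu(H)$ over the blocking recovers $\mu(W)$ because $\mu$ is Bernoulli. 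To salvage your approach you would essentially have to reprove these ingredients (a gap uniform over all blocks, a boundary-component estimate, and the passage from $\Lambda_{d-1}$ to $\Lambda$); as written, the step ``distinct sub-cylinders then yield distinct $\delta$-components'' is asserted, not proved. Finally, your (ii)$\Rightarrow$(iii) sketch via a ``product of $h$-factors'' is not how the bound closes: Corollary~\ref{cor:good} instead shows that absence of inner trivial points in $\Lambda_j$ forces every $\delta$-component to meet a boundary set of geometrically decaying $\mu_j$-measure, and lifts the resulting dimension drop to $\Lambda$ through Lemma~\ref{lem:grow}; there is no product formula for $h_\Lambda(\delta)$ in terms of the $h_{\Lambda_j}(\delta)$.
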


\begin{remark} \emph{
Zhang and Xu \cite{ZX22} proved that if $\Lambda$ is a slicing self-affine sponge (see Remark \ref{rem:slicing} for a definition), then $\Lambda$ possesses inner trivial points   as soon as    it possesses trivial points.
This explains why in Proposition \ref{prop:law}, we  require that $\Lambda_j$ possesses
trivial points instead of  inner trivial points of $\Lambda_j$.
}
\end{remark}

\begin{thm}\label{thm:dropp}  Let $\Lambda$ be a non-degenerated
  Lalley-Gatzouras sponge, and  let $\mu$ be the canonical Bernoulli measure. If $\Lambda$ does not satisfy the maximal power law, then there exists
  a real number $0<\chi <\dim_B \Lambda$ such that for any cylinder $R$,
\begin{equation}\label{eq:drop}
h_R\left( \delta \right) =O\left( \mu \left( R \right) \delta ^{-\chi} \right) ,\ \ \delta \rightarrow 0.
\end{equation}
\end{thm}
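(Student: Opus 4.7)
The plan is to convert the failure of the maximal power law into a quantitative component-merging estimate and then iterate it against the cylinder structure. By Theorem \ref{thm:affine}, since $\Lambda$ violates the maximal power law, there exists an index $j_0 \in \{1,\dots,d\}$ such that the trivial points of $\Lambda_{j_0}$ are not dense in $\Lambda_{j_0}$. Fix a cylinder $C^*$ of the natural IFS generating $\Lambda_{j_0}$ whose relative interior contains no trivial point of $\Lambda_{j_0}$. The first step is to upgrade this to a \emph{uniform} merging constant: find $\eta_0 > 0$ such that every $y \in C^*$ lies in a connected subset of $\Lambda_{j_0}$ of diameter at least $\eta_0$. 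The justification is that every sub-cylinder of $C^*$ is affinely equivalent to a scaled copy of $\Lambda_{j_0}$, so degenerating component diameters would, via a compactness argument, produce a trivial point in $C^*$, contradicting the choice of $C^*$.

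Next, fix an arbitrary cylinder $R = \varphi_I(\Lambda)$ and a scale $\delta > 0$, and partition $R$ into sub-cylinders $R_J = \varphi_{IJ}(\Lambda)$ whose shortest side is of order $\delta$. By Proposition \ref{prop:sponge}, the number of such $R_J$ is comparable to $\mu(R)\delta^{-s}$, where $s = \dim_B \Lambda$, and each contributes $O(1)$ to $h_R(\delta)$. The key observation is that the sub-cylinders whose $\pi_{j_0}$-projection is a scaled copy of $C^*$ form connected clusters in the rescaled picture of diameter at least $\eta_0$, and therefore merge into a single $\delta$-component in the ambient sponge. By the product structure of the canonical Bernoulli measure $\mu$, the proportion of sub-cylinders of this saving type is bounded below by a uniform constant $c > 0$ across scales, yielding a single-scale improvement $h_R(\delta) \leq (1-c)\, C\, \mu(R)\delta^{-s}$ modulo controlled boundary terms.

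Telescoping this one-scale saving across the $k \asymp \log(\diam(R)/\delta)$ nested scales between $\diam(R)$ and $\delta$ yields $h_R(\delta) \leq C(1-c)^k \mu(R)\delta^{-s}$, which rearranges to $h_R(\delta) \leq C'\mu(R)\delta^{-\chi}$ for some $\chi = s - \log(1-c)^{-1}/\log r_{\min}^{-1} < s$, where $r_{\min} < 1$ is the smallest contraction ratio of the IFS; the constants depend only on $c$ and $r_{\min}$, hence are uniform over $R$. The main technical obstacle is the uniform merging estimate: the anisotropy of the Lalley-Gatzouras IFS causes the Euclidean metric on $R$ to pull back to a non-conformal metric on $\Lambda$, so a case analysis is required based on which coordinates are already resolved at scale $\delta$ (i.e., those whose contraction in $\varphi_I$ is below $\delta$), combined with a fiber-by-fiber lift of the connectedness in $\Lambda_{j_0}$ through the projection $\pi_{j_0}$. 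Here the cylinder box-counting property of $\mu$ from Proposition \ref{prop:sponge}, applied both to $\Lambda$ and to the intermediate projections $\Lambda_j$, supplies the uniformity needed to close the argument.
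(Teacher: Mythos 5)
Your overall reduction (failure of the maximal power law forces some projection $\Lambda_{j_0}$ to lack trivial points in a whole cylinder) matches the paper's starting point, but the two steps that are supposed to produce the actual power drop both have genuine gaps. The decisive one is the telescoping. A single-scale estimate of the form $h_R(\delta)\le (1-c)\,C\,\mu(R)\delta^{-s}$ is only a constant-factor improvement on the trivial bound, and it does not compound: the subadditivity $h_{A\cup B}(\delta)\le h_A(\delta)+h_B(\delta)$ lets you pass from a cylinder to its children without loss, but summing the children's single-scale savings returns exactly one factor of $(1-c)$, not $(1-c)^k$. To iterate you would need a recursion $h_R(\delta)\le(1-c)\sum_{H}h_H(\delta)$ at every intermediate scale $\rho\in(\delta,\diam R)$, i.e.\ that a definite proportion of the \emph{$\delta$-components} merge \emph{across} the scale-$\rho$ cylinders; your merging mechanism (clusters of scale-$\delta$ cylinders of rescaled diameter $\eta_0$) lives inside a single parent cylinder and says nothing about this, and $\delta$-connectivity is not monotone under shrinking $\delta$, so coarse-scale merges do not persist at finer scales. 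The second gap is the uniform constant $\eta_0$: absence of trivial points in the compact set $C^*$ does not yield a uniform positive lower bound on component diameters (the function $x\mapsto\diam(\mathrm{comp}(x))$ is only upper semicontinuous, so it can be positive everywhere yet have infimum $0$), and the compactness argument you sketch does not rule this out. Moreover, merging cylinders of $\Lambda$ on the basis of connectivity of their $\pi_{j_0}$-projections goes the wrong way: two points of $\Lambda$ lying over the same connected subset of $\Lambda_{j_0}$ need not be $\delta$-close unless the remaining coordinates are already resolved below scale $\delta$ (this is exactly the hypothesis $S(H)<\delta/2$ behind \eqref{eq:epsilon} in Lemma \ref{lem:grow}).

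The paper's proof avoids all of this. The exponential-in-depth saving comes not from iterated merging but from the boundary: by Lemma \ref{lem:island}, ``no inner trivial points'' is equivalent to ``every $\delta$-connected component meets $\partial[0,1]^d$'', so $h_W(\delta)$ is controlled by the number of $\delta$-boxes meeting $\varphi_I(\Lambda_m^b)$, and $\mu(\Lambda_m^b)\le 2dQ^m$ with $Q<1$ (Lemma \ref{lem:QQ}) decays geometrically in the depth $m\asymp\log(S(W)/\delta)$; combined with the cylinder box-counting property of $\mu$ this yields the drop $\chi=\alpha-s$ in one stroke (Theorem \ref{thm:no-trivial-point}). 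The passage from the bad projection $\Lambda_j$ up to $\Lambda$ is then a separate induction (Lemma \ref{lem:grow}) using $h_H(\delta)\le h_{\pi_{d-1}(H)}(\varepsilon)$ for flat cylinders, which adds exactly $\beta_k$ to both the exponent and the box dimension at each lift and so preserves the strict gap. To salvage your approach you would essentially have to prove this boundary estimate anyway, so I recommend restructuring along those lines.
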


\begin{remark} \emph{ We remark that if a cylinder box-counting measure $\mu$ is also a component-counting measure,   then for any cylinder $R$
of $\Lambda$ and   $\delta$ small enough, it holds that
$$
h_R(\delta)\asymp N_R(\delta),
$$
which means that even locally,  a large portion of $\delta$-connected components are very `small'.
}
\end{remark}

The paper is organized as follows. Theorem \ref{thm:conformal} is proved in Section 2.
In Section 3, we recall some known results about Lalley-Gatzouras type sponges.
In Section 4, we give some notations and lemmas.
In Section 5, we deal with the case that there exists $j$ such that $\Lambda_j$ contains
no inner trivial points.
Theorem \ref{thm:dropp}   and Theorem \ref{thm:affine} are proved in Section 6.

\section{\textbf{Component-counting measures of self-conformal sets}}

Let $\alpha>0$. A conformal map $S: V\to \R^d$ is $\mathcal{C}^{1+\alpha}$ differentiable if there exists a constant $C>0$ such that
\begin{equation}\label{Holder}
\left ||S'(x)|-|S'(y)|\right |\le C|x-y|^\alpha \text{ for all } x, y\in V.
\end{equation}
The attractor of an IFS $\{S_j\}_{j=1}^N$ is called a  $\mathcal{C}^{1+\alpha}$ \emph{self-conformal set},
 if all maps in the IFS are $\mathcal{C}^{1+\alpha}$ differentiable. 

In this section, we always assume that  $K$ is a $\mathcal{C}^{1+\alpha}$ self-conformal set generated by the IFS $\{S_j\}_{j=1}^N$. Let $s=\dim_H K(=\dim_B K)$
and $\mu$ be the Gibbs measure on $K$.

For $A\subset\mathbb{R}^d$, we denote $|A|$ the diameter of $A$.
The following lemmas are well-known.

\begin{lem}[Principle of bounded distortion \cite{Patzschke97,Peres01}]\label{lem:distortion}
Let $E$ be a compact subset of $V$. Then there exists  $C_1\geq 1$ such that for any $\bsigma\in\Sigma^*$,
$$
C_1^{-1}|S_\bsigma(E)|\cdot|x-y|\le |S_\bsigma(x)-S_\bsigma(y)|\le C_1|S_\bsigma(E)|\cdot|x-y|
$$
for all $x,y\in E$.
\end{lem}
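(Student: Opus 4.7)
The plan is to deduce the two-sided inequality in two stages: first, show that the scalar derivative $|S_\bsigma'|$ is essentially constant on $E$, uniformly in $\bsigma\in\Sigma^*$; then use the conformal mean value inequality to convert this derivative comparison into the claimed bound involving $|S_\bsigma(x)-S_\bsigma(y)|$ and $|S_\bsigma(E)|$.

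For the first stage, write $\bsigma=i_1\cdots i_n$, and set $y_k(x):=S_{i_{k+1}\cdots i_n}(x)$ (with $y_n(x)=x$). The chain rule for conformal maps gives
$$|S_\bsigma'(x)|=\prod_{k=1}^n|S_{i_k}'(y_k(x))|.$$
Since each $S_j$ is a strict contraction on a compact neighborhood of $E$, there exist constants $0<c<1$ and $m>0$ with $|S_j'|\le c$ and $|S_j'|\ge m$ on that neighborhood, so in particular $|y_k(x)-y_k(y)|\le c^{n-k}|x-y|$. The $\mathcal{C}^{1+\alpha}$ hypothesis \eqref{Holder}, together with $|S_{i_k}'|\ge m$, yields
$$\Big|\log|S_{i_k}'(y_k(x))|-\log|S_{i_k}'(y_k(y))|\Big|\le\frac{C}{m}c^{(n-k)\alpha}|x-y|^\alpha,$$
and summing the telescope gives $\big|\log(|S_\bsigma'(x)|/|S_\bsigma'(y)|)\big|\le C|E|^\alpha/(m(1-c^\alpha))$, a bound independent of $\bsigma$. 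Consequently $|S_\bsigma'(x)|\asymp|S_\bsigma'(y)|$ uniformly over $\bsigma\in\Sigma^*$ and $x,y\in E$.

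For the second stage, the conformal mean value inequality, applied along the segment from $x$ to $y$, produces some $z\in E$ with $|S_\bsigma(x)-S_\bsigma(y)|\asymp|S_\bsigma'(z)|\cdot|x-y|$; similarly one picks $z_0\in E$ with $|S_\bsigma(E)|\asymp|S_\bsigma'(z_0)|\cdot|E|$. Taking the ratio and invoking the uniform comparability of $|S_\bsigma'|$ over $E$ from the first stage gives
$$|S_\bsigma(x)-S_\bsigma(y)|\asymp\frac{|S_\bsigma(E)|}{|E|}\cdot|x-y|,$$
and absorbing $|E|$ into $C_1$ yields the desired inequality. The main obstacle is arranging the domain on which the uniform bounds $m\le|S_j'|\le c$ apply: the iterated images $y_k(E)$ must stay inside that neighborhood throughout the telescoping argument, so one should replace $E$ by a compact $\tilde E$ with $E\subset\tilde E\subset V$ and $S_j(\tilde E)\subset \tilde E$ for every $j$, which exists by standard IFS compactness arguments. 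A minor technical point in dimension $d\ge 2$ is that the ``mean value'' step must be realized as an integration along the segment joining $x$ and $y$, requiring $\tilde E$ to be taken convex, which is easily arranged in the OSC setting.
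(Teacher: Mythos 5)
The paper offers no proof of this lemma; it is quoted verbatim from \cite{Patzschke97,Peres01}, so your proposal can only be judged as a standalone argument. Your first stage is correct and is exactly the classical derivation in those references: the chain rule, the geometric decay $|y_k(x)-y_k(y)|\le c^{\,n-k}|x-y|$, and the telescoped H\"older estimate (note that \eqref{Holder} is stated for the scalar $|S'|$, which is all you use) give a bound on $\bigl|\log(|S_\bsigma'(x)|/|S_\bsigma'(y)|)\bigr|$ that is uniform in $\bsigma$. The requirement that all iterates $y_k(E)$ stay in a fixed compact subset of $V$ where $0<m\le |S_j'|\le c<1$ is a real issue and your forward-invariant $\tilde E$ is the right fix; such a set exists (e.g.\ a small closed neighborhood of $K\cup E$), though it generally cannot be taken convex while remaining inside $V$ --- the standard substitute is a compact \emph{quasiconvex} neighborhood, in which any two points are joined by a path of length at most $c_0|x-y|$, and that is all the integration step needs.

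The genuine gap is in your second stage. For $d\ge 2$ there is no mean value theorem for vector-valued maps producing a point $z$ with $|S_\bsigma(x)-S_\bsigma(y)|\asymp |S_\bsigma'(z)|\,|x-y|$: integrating $S_\bsigma'$ along a path only yields the \emph{upper} bound $|S_\bsigma(x)-S_\bsigma(y)|\le \sup_z|S_\bsigma'(z)|\cdot c_0|x-y|$. The lower bound, which is the left-hand inequality of the lemma and the harder half, does not follow from the derivative distortion of stage one alone; a curve can have far-apart parametrizing points yet close endpoints. This is precisely where conformality (rather than mere $\mathcal C^{1+\alpha}$ smoothness) must be invoked: for $d\ge 3$ Liouville's theorem makes $S_\bsigma$ the restriction of a M\"obius map, for which the two-sided estimate is explicit; for $d=2$ a $\mathcal C^1$ conformal map is (anti)holomorphic and one uses the Koebe distortion theorem; for $d=1$ the mean value theorem genuinely applies. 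Without one of these inputs the inequality $|S_\bsigma(x)-S_\bsigma(y)|\ge C_1^{-1}|S_\bsigma(E)|\,|x-y|$ remains unproved. Once the two-sided comparison $|S_\bsigma(x)-S_\bsigma(y)|\asymp \|S_\bsigma'\|\,|x-y|$ is secured, your final step --- applying it to a diameter-realizing pair to get $|S_\bsigma(E)|\asymp\|S_\bsigma'\|\,|E|$ and dividing, absorbing the constant $|E|$ into $C_1$ --- is fine.
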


Strings $\bomega,\btau\in\Sigma^*$ are \emph{incomparable} if each is not a prefix of the other.

\begin{lem}[Alfors regularity \cite{Patzschke97,Peres01}] \label{lem:Alfors}  If $K$ satisfies the OSC, then  $\mu\asymp\SH^s|_K$, and there is a constant $C_2>0$ such that for any $\bsigma\in \Sigma^*$,
$$
C_2^{-1}|K_\bsigma|^s\leq \mu(K_\bsigma)\leq C_2 |K_\bsigma|^s.
$$
 Moreover, $K_\bomega$ and $K_\btau$ are disjoint in $\mu$ provided that
$\bomega$ and $\btau$ are incomparable.
\end{lem}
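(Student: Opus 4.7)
The strategy is to deduce both the comparison $\mu\asymp\SH^s|_K$ and the disjointness statement from the cylinder estimate $\mu(K_\bsigma)\asymp|K_\bsigma|^s$, which itself follows from the thermodynamic formalism combined with bounded distortion. Concretely, the Gibbs property of $\mu$ for the geometric potential $\phi=-s\log|S'|$ supplies a constant $C_3\geq 1$ such that
$$
C_3^{-1}\leq \frac{\mu(K_\bsigma)}{|S'_\bsigma(x)|^s}\leq C_3
$$
for every $\bsigma\in\Sigma^*$ and every $x\in K$, with the topological pressure $P(\phi)$ vanishing precisely at $s=\dim_H K$. Applying Lemma \ref{lem:distortion} to substitute $|S'_\bsigma(x)|\asymp|K_\bsigma|$ then yields the cylinder estimate $\mu(K_\bsigma)\asymp|K_\bsigma|^s$, which is the middle assertion of the lemma with some constant $C_2$.

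To compare $\mu$ with $\SH^s|_K$, I would argue in both directions. For $\SH^s|_K\leq C\mu$, cover any Borel $A\subset K$ by essentially disjoint cylinders $\{K_{\bsigma_i}\}$ of diameter at most $\delta$; summing $|K_{\bsigma_i}|^s\asymp\mu(K_{\bsigma_i})$ over $i$ gives $\SH^s_\delta(A)\leq C\mu(A)$, and $\delta\to 0$ yields $\SH^s(A)\leq C\mu(A)$. For the reverse $\mu\leq C'\SH^s|_K$, start from an arbitrary cover $\{U_i\}$ of $A$ of mesh $\delta$ and, for each $U_i$, select a uniformly bounded number of cylinders $K_\bsigma$ of diameter comparable to $|U_i|$ whose union contains $U_i\cap K$. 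The cylinder estimate then bounds $\mu(U_i\cap K)\leq C|U_i|^s$, and summing produces $\mu(A)\leq C'\SH^s(A)$.

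For the disjointness claim, factor $\bomega=\eta\bomega'$ and $\btau=\eta\btau'$ through their longest common prefix $\eta$, so that $K_\bomega\cap K_\btau\subset S_\eta(K_i\cap K_j)$ where $i,j$ are the distinct first letters of $\bomega',\btau'$. By the OSC one has $K_i\cap K_j\subset K\cap\partial U$ for a feasible open set $U$, and a standard argument using Ahlfors regularity together with bounded distortion shows $\SH^s(K\cap\partial U)=0$, hence $\mu(K_\bomega\cap K_\btau)=0$. The main technical hurdle will be the Vitali-type covering step in the direction $\mu\leq C'\SH^s|_K$, which is where the self-conformal geometry is exploited non-trivially: bounded distortion together with OSC is needed to ensure that every set of diameter $r$ meets only boundedly many cylinders $K_\bsigma$ with $|K_\bsigma|$ comparable to $r$, so that the selection of cylinders in each $U_i$ has uniformly bounded cardinality.
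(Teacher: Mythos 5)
The paper does not prove this lemma at all: it is quoted as a known result with citations to Patzschke and to Peres--Rams--Simon--Solomyak, so there is no in-paper argument to compare against. Your outline is a faithful reconstruction of the standard proof from those references, and its overall architecture (Gibbs property plus bounded distortion $\Rightarrow$ $\mu(K_\bsigma)\asymp|K_\bsigma|^s$; two-sided covering comparison $\Rightarrow$ $\mu\asymp\SH^s|_K$; OSC plus $\SH^s$-nullity of the overlap set $\Rightarrow$ the disjointness claim) is correct.

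Two caveats are worth recording. First, the steps you label ``standard'' --- that a set of diameter $r$ meets a uniformly bounded number of cylinders $K_\bsigma$ with $|K_\bsigma|\asymp r$, and that $\SH^s(K\cap\partial U)=0$ --- are not routine for self-conformal sets under the bare OSC: they rest on the equivalence of OSC with the strong open set condition and with $\SH^s(K)>0$, which is precisely the main theorem of \cite{Peres01}. Invoking them is legitimate for a quoted lemma, but they are the genuinely hard content, not bookkeeping. Second, a small inaccuracy in the disjointness step: from $S_i(U)\cap S_j(U)=\emptyset$ one gets $K_i\cap K_j\subset S_i(\partial U)\cap S_j(\partial U)$, hence $K_i\cap K_j\subset S_i(K\cap\partial U)$, not $K_i\cap K_j\subset K\cap\partial U$ as written; one then concludes $\mu(K_i\cap K_j)=0$ by pushing the null set $K\cap\partial U$ forward under $S_i$ (using the quasi-Bernoulli property of the Gibbs measure, or the already-established comparison with $\SH^s$ together with bounded distortion). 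With these points made explicit the argument is complete.
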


\begin{proof}[\textbf{Proof of  Theorem \ref{thm:conformal}}] 
First, let $K$ be a self-conformal set satisfying the maximal power law.
 Let $K_I$ be a cylinder and let $\delta<|K_I|$.
Set $\delta'=C_1\delta/|K_I|$, where $C_1$ is the constant in Lemma \ref{lem:distortion}.
Let $U_1$ and $U_2$ be two $\delta'$-connected components of $K$.
Then by Lemma \ref{lem:distortion}, $x\in S_I(U_1)$ and $y\in S_I(U_2)$ belongs to different
$\delta$-connected component of $K_I$. Therefore,
$$
\begin{array}{rlr}
h_\delta(K_I)& \geq h_{\delta'}(K) &\text{(By the similarity mentioned above.)}\\
             &\geq M (\delta')^{-s}   &\text{(By the maximal power law.)}\\
             &=MC_1^{-s}|K_I|^s\delta^{-s} &\\
             &\geq M(C_1C_2)^{-s}\mu(K_I) \delta^{-s}.  &\text{(By Lemma \ref{lem:Alfors}.)}
\end{array}
$$
The other direction inequality can be proved in the same manner. It follows that
$\mu$ is a component-counting measure.

On the other hand, if $\mu$ is a component-counting measure, then obviously $K$ satisfies
the maximal power law (see Remark \ref{rem:homo}). The theorem is proved.
\end{proof}

\section{\textbf{Self-affine sponges of Lalley-Gatzouras type}}

In this section, we recall some known results on Lalley-Gatzouras type sponges.

 \subsection{Lalley-Gatzouras type sponges}  We call $f:\R^d\to \R^d$, $f(x)=Tx+b$ a \emph{diagonal self-affine mapping}
if $T$ is a $d\times d$ diagonal matrix such that all the diagonal entries are positive numbers.
An IFS $\Phi=\{\phi_j(x)\}_{j=1}^m$ is called a \emph{diagonal self-affine IFS}
if all the maps $\phi_j(x)$ are distinct diagonal self-affine contractions; the attractor is called
a \emph{diagonal self-affine sponge}, and we denote it by $\Lambda_\Phi$. Without loss of generality, we will always assume that  $\Lambda_\Phi\subset [0,1]^d$.
The following is an alternative definition.

\begin{defn}[Diagonal IFS \cite{Das16}] \emph{
 Let $d\geq 1$ be an integer.
 For each $i\in \left\{ 1,...,d \right\}$, let $A_i=\left\{ 0,1,...,n_i-1 \right\}$ with $n_i\ge 2$, and let $\varPhi _i=\left( \phi _{a,i} \right) _{a\in A_i}$ be a collection of contracting similarities of $\left[ 0,1 \right]$, called the base IFS in coordinate $i$. Let $A=\prod_{i=1}^d{A_i}$, and for each $a=\left( a_1,\cdots ,a_d \right) \in A$, consider the contracting affine maps $\phi _a:\left[ 0,1 \right] ^d\rightarrow \left[ 0,1 \right] ^d$ defined by the formula
$$
\phi _a\left( x_1,\cdots x_d \right) =\left( \phi _{a,1}\left( x_1 \right) ,\cdots ,\phi _{a,d}\left( x_d \right) \right)
$$
where $\phi _{a,i}$ is shorthand for $\phi _{a_i,i}$ in the formula above. Then we can get
$$
\phi _a\left( \left[ 0,1 \right] ^d \right) =\prod_{i=1}^d{\phi _{a,i}\left( \left[ 0,1 \right] \right)}\in \left[ 0,1 \right] ^d
$$
}

\emph{Given $\mathcal{D}\subset A$, we call the collection $\varPhi =\left( \phi _a \right) _{a\in \mathcal{D}}$ a \emph{diagonal IFS}, and we call its invariant set $\Lambda$ a \emph{diagonal self-affine sponge.}
}
\end{defn}

\begin{remark}\label{rem:slicing} \emph{A diagonal self-affine IFS $\varPhi$ is called a slicing self-affine IFS, if for each $i\in \left\{ 1,...,d \right\}$,
$$
\left[ 0,1 \right] =\phi _{0,i}\left[ 0,1 \right) \cup \cdots \cup \phi _{n_i-1,i}\left[ 0,1 \right)
$$
is a partition of $[0,1)$ from left to right; in this case, we call $\Lambda$ a
\emph{slicing self-affine sponge}.
In particular, if $d=2$, then we call $\Lambda$ a \emph{Bara\'{n}ski carpet}.
Furthermore, if for each $1\leq j\leq d$,  the contraction ratios of $\phi _{a,j}$, $a\in \SD$, are all equal, then $\Lambda$ is the \emph{self-affine Sierpi$\acute{\text{n}}$ski sponge}.
}
\end{remark}

We say that $\Phi$ satisfies the \emph{coordinate ordering condition} if
\begin{equation}\label{eq:order}
 \phi '_{a,1}  >\cdots >  \phi '_{a,d}, \quad \text{ for all } a\in \mathcal{D},
\end{equation}
where $f'$ denotes the derivative of the function $f$.

 Recall that $\pi_j(x_1,\dots, x_d)=(x_1,\dots, x_j)$. Let
$$
\varPhi _{\left\{ 1,\cdots ,j \right\}}=\left( \left( \phi _{a,1},\cdots ,\phi _{a,j} \right) \right) _{a\in \pi _j\left( \mathcal{D} \right)},
$$
which is an IFS on $\R^j$. Clearly $\Lambda _j=\pi _j\left( \Lambda \right)$  is the attractor of the IFS $\varPhi _{\left\{ 1,\cdots ,j \right\}}$.

\begin{defn} [\cite{Das16}] \label{def:good}\emph{
Let $\Lambda_\Phi$ be a diagonal self-affine sponge satisfying \eqref{eq:order}.
 We say $\Phi$ satisfies the \emph{neat projection condition},
 if for each $j\in \{1,\dots, d\}$, the IFS $\Phi_{\{1,\dots,j\}}$ satisfies
 the OSC with the open set $\mathbb{I}^j=(0,1)^j$,  that is,
$$
\left\{\phi_{\bd,\{1,\dots, j\}}(\mathbb{I}^j)\right\}_{\bd\in\pi_j(\SD)}
$$
are disjoint.} \emph{Moreover, we say a diagonal self-affine spong $\Lambda_\Phi$ is
of \emph{Lalley-Gatzouras type} if it  satisfies the coordinate ordering condition as well as the
neat projection condition.}
\end{defn}

\subsection{The canonical Bernoulli measure}
Let $\bp=(p_\bd)_{\bd\in \SD}$ be a probability weight. The unique probability measure $\mu_\bp$
satisfying
$$
\mu_\bp(\cdot)=\sum_{\bd\in \SD} p_\bd \phi_\bd^{-1}(\cdot)
$$
is called the \emph{Bernoulli measure} determined by the weight $\bp$.

Let $\Lambda_{\Phi}$ be a   Lalley-Gatzouras type sponge.
Now we define a sequence $\{\beta_j\}_{j=1}^d$
related to $\Lambda_\Phi$.
Let $\beta_1>0$ be the unique real number satisfying
$$\sum\limits_{f_1\in \Phi_{\{1\}}} (f_1')^{\beta_1}=1.$$
If $\beta_1,\dots, \beta_{j-1}$ are defined, we define $\beta_j>0$ to be the unique real number such that
\begin{equation}\label{eq:flag}
\sum\limits_{(f_1,\dots, f_j)\in \Phi_{\{1,\dots, j\}}} \prod_{k=1}^j (f_k')^{\beta_k}=1.
\end{equation}

Next, for ${\mathbf f}=(f_1,\dots, f_j)\in \Phi_{\{1,\dots, j\}}$, define
\begin{equation}\label{eq:weight}
p_{\mathbf f}=\prod_{k=1}^j (f_k')^{\beta_k}.
\end{equation}
Let $\mu_j$ be the Bernoulli measure on $\Lambda_j$ defined by the weight
$(p_{\mathbf f})_{{\mathbf f} \in \Phi_{\{1,\dots, j\}}}$.
Especially, we denote $\mu:=\mu_d$, and we call $\mu$ the \emph{canonical Bernoulli measure}
of $\Lambda$. It is shown that

\begin{thm} (\cite{HRWX}) Let $\Lambda$ be a Lalley-Gatzouras type sponge, then
$$\dim_B \Lambda_{\Phi}=\sum_{j=1}^d \beta_j,$$
 and the canonical Bernoulli measure $\mu$  is a cylinder box-counting measure.
 \end{thm}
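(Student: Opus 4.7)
The plan is to prove both assertions simultaneously via a mixed-scale covering argument based on the Lalley--Gatzouras stopping-time technique, adapted to count $\delta$-mesh boxes inside each cylinder, not just inside $\Lambda$ itself. For a word $\sigma = a_1 a_2 \cdots a_n \in \SD^*$ and each coordinate $j \in \{1,\dots,d\}$, define the stopping time $n_j(\sigma,\delta)$ to be the largest $m$ with $\prod_{k=1}^m \phi'_{a_k,j} \geq \delta$. By the coordinate ordering condition \eqref{eq:order}, one has $n_1(\sigma,\delta) \leq n_2(\sigma,\delta) \leq \cdots \leq n_d(\sigma,\delta)$, so a single word gives rise to a nested sequence of truncated prefixes. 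Associated to $\sigma$ is an ``approximate box'' $B(\sigma,\delta)$ whose side in the $j$-th direction is comparable to $\delta$. A uniform-constant argument (depending only on $d$ and on $\min \phi'_{a,d}$) shows that each such approximate box is hit by $O(1)$ standard $\delta$-mesh boxes, and that the approximate boxes from a suitable stopping antichain cover $\Lambda$ with bounded overlap.

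Next I would organize the count of approximate boxes into a layered sum, exploiting the neat projection condition. Because $\{\phi_{\bd,\{1,\dots,j\}}(\mathbb{I}^j)\}_{\bd\in\pi_j(\SD)}$ is a disjoint family for each $j$, two words contribute distinct boxes at level $j$ precisely when their $\pi_j$-projected prefixes differ. Consequently, the number of approximate boxes factors coordinate by coordinate: for fixed $\sigma|_{n_1}$ the number of distinct extensions $\sigma|_{n_2}$ is governed by $\Phi_{\{1,2\}}$, and so on up to level $d$. Writing out this product and using the defining equations \eqref{eq:flag}, $\sum_{(f_1,\dots,f_j)\in\Phi_{\{1,\dots,j\}}} \prod_{k=1}^{j}(f_k')^{\beta_k} = 1$, a renewal / pigeonhole argument yields
\begin{equation*}
N_\Lambda(\delta) \asymp \delta^{-\sum_{j=1}^d \beta_j},
\end{equation*}
which gives the dimension identity. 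Moreover, the weights $p_{\mathbf{f}} = \prod_{k=1}^{j}(f_k')^{\beta_k}$ in \eqref{eq:weight} are precisely calibrated so that each approximate box at scale $\delta$ carries $\mu$-mass comparable to $\delta^{\sum\beta_j}$, which is exactly what is needed for the count to be comparable to $\mu(\Lambda)\delta^{-s}$.

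To promote this to the cylinder box-counting property, I would rerun the stopping-time construction starting from a cylinder $R = \phi_I(\Lambda)$ instead of from $\Lambda$. Because $R$ is a $d$-fold product of rescaled copies of $\pi_j$-images with contraction ratios $\phi'_{I,j}$, and because $\delta < S(R) = \min_j \phi'_{I,j}$, the stopping times simply translate: every extension $\sigma = I\tau$ has $n_j(\sigma,\delta) = |I| + n_j(\tau,\delta/\phi'_{I,j})$. Factoring the Bernoulli weight $p_\sigma = p_I \cdot p_\tau$ then yields $N_R(\delta) \asymp p_I \cdot \delta^{-s} = \mu(R)\,\delta^{-s}$ with a constant independent of $R$ and $\delta$.

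The main obstacle, and where the neat projection condition is indispensable, is the layered counting step: one must ensure that distinct contributions in the nested sum really correspond to disjoint approximate boxes, and conversely that all relevant mesh boxes are captured. Without the OSC with open set $\mathbb{I}^j$ at every level $j$, the projections $\pi_j(R)$ could overlap and the product-structure count would collapse, producing only an upper bound. The subtle point is to set up the stopping-time antichain so that at each level $j$ the separation of projections is inherited from level $j-1$ while simultaneously the residual contractions in coordinates $j, j+1, \dots, d$ remain $\leq \delta$; juggling these two constraints uniformly over all cylinders $R$ is the technical heart of the argument.
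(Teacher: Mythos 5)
The paper does not prove this statement: it is quoted verbatim from the preprint \cite{HRWX} (it is the same result as Proposition \ref{prop:sponge} together with the dimension formula), and no argument for it appears anywhere in the text. So there is no internal proof to compare yours against; I can only assess your sketch on its own terms, and it has two concrete problems. First, the monotonicity of your stopping times is reversed. Since \eqref{eq:order} reads $\phi'_{a,1}>\cdots>\phi'_{a,d}$, the products $\prod_{k=1}^{m}\phi'_{a_k,j}$ decrease in $j$, so the $d$-th coordinate crosses the threshold $\delta$ first and $n_1(\sigma,\delta)\geq n_2(\sigma,\delta)\geq\cdots\geq n_d(\sigma,\delta)$. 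Your layered count must therefore be organized the other way around: the full alphabet $\SD$ acts on the initial segment up to $n_d$, and successively coarser projections $\pi_j(\SD)$ act on the later segments $(n_{j+1},n_j]$.

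Second, and more seriously, the calibration claim at the heart of your argument --- that each approximate box at scale $\delta$ carries $\mu$-mass comparable to $\delta^{\sum_j\beta_j}$ --- is false in general. Already for a Bedford--McMullen carpet one computes $p_a\equiv 1/N$ from \eqref{eq:weight}, so the $\mu$-mass of the approximate square determined by a full word on the first phase and by first coordinates $b_k$ on the second phase contains the factor $\prod_k\bigl(N_{b_k}/N\bigr)$, where $N_b=\#\{a\in\SD:\pi_1(a)=b\}$; the target value $\delta^{\beta_1+\beta_2}$ instead contains the factor $t^{-(n_1-n_2)}$ with $t=\#\pi_1(\SD)$. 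When the columns have unequal cardinalities the ratio of these two quantities is unbounded as $\delta\to 0$. The object that \emph{is} uniformly comparable to $\delta^{\sum_j\beta_j}$ on every approximate box, and whose total over the stopping antichain equals $1$ (respectively $\mu(R)$ when the antichain is rooted at the prefix $I$), is not the $\mu$-mass of the box but the phase-dependent product weight: $p_a$ on the segment up to $n_d$, and then the weight $p_{\mathbf f}=\prod_{k=1}^{j}(f'_k)^{\beta_k}$ of \eqref{eq:weight}, which sums to $1$ over $\Phi_{\{1,\dots,j\}}$ by \eqref{eq:flag}, on each later segment $(n_{j+1},n_j]$. With that replacement the rest of your architecture --- approximate boxes meeting $O(1)$ mesh boxes, bounded overlap via the neat projection condition, and translation of the stopping times inside a cylinder $R=\phi_I(\Lambda)$ for $\delta<S(R)$ --- does go through, but as written the decisive step is incorrect and the asserted comparison would fail.
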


 Denote
 \begin{equation}
 \alpha _j:=\dim_B \Lambda_j=  \sum_{k=1}^j{\beta _k}.
 \end{equation}
  Especially $\dim_B \Lambda=\alpha_d:=\alpha$.

\begin{remark}\emph{ Let $R$ and $\widetilde{R}$ be two $\ell$-th cylinders of $\Lambda_\Phi$, and $\nu$ be
a Bernoulli measure of $\Lambda_\Phi$. Then $\nu(R\cap \widetilde{R})=0$ is always true.
See for instance \cite{Rao19}.
}
\end{remark}

%


 \begin{lem}\label{lem:deg}   Let $\Lambda$ be a non-degenerated  diagonal self-affine sponge of Lalley-Gatzouras type.  Then all $\Lambda_j$, $1\leq j\leq d$, are non-degenerated.
 \end{lem}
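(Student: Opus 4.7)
The plan is to prove the contrapositive: if $\Lambda_j$ is degenerate for some $j \in \{1,\dots,d\}$, then $\Lambda$ itself is degenerate, violating the hypothesis. The case $j = d$ is trivial since $\Lambda_d = \Lambda$, so I would focus on $j < d$. Notably, the argument will use only the identity $\Lambda_j = \pi_j(\Lambda)$ and the fact that coordinate hyperplanes pull back to coordinate hyperplanes under $\pi_j$; the Lalley--Gatzouras structure of $\Phi$ (the ordering and neat projection conditions, the diagonal form, etc.) plays no essential role in this particular lemma.

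First I would enumerate the $(j-1)$-dimensional faces of the cube $[0,1]^j$: they are exactly the sets $F^{(j)}_{i,c} := \{y \in [0,1]^j : y_i = c\}$ for $i \in \{1,\dots,j\}$ and $c \in \{0,1\}$. Assuming $\Lambda_j$ is degenerate, there exist such $i,c$ with $\Lambda_j \subset F^{(j)}_{i,c}$. Then for every $x = (x_1,\dots,x_d) \in \Lambda$, the point $\pi_j(x) = (x_1,\dots,x_j)$ lies in $\Lambda_j \subset F^{(j)}_{i,c}$, forcing $x_i = c$. Hence $\Lambda \subset F^{(d)}_{i,c}$, which is a $(d-1)$-dimensional face of $[0,1]^d$, contradicting the non-degeneracy of $\Lambda$.

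I do not foresee a substantive obstacle: the lemma amounts to bookkeeping about the definition of non-degeneracy, combined with the elementary observation that for $i \le j$ one has $\pi_j^{-1}\bigl(F^{(j)}_{i,c}\bigr) \cap [0,1]^d = F^{(d)}_{i,c}$. The only point worth flagging is that the coordinate index $i$ identified in $F^{(j)}_{i,c} \subset [0,1]^j$ automatically satisfies $i \le j \le d$, so it is a valid coordinate index in $[0,1]^d$ and the face constraint can be lifted back from $\Lambda_j$ to $\Lambda$ without modification.
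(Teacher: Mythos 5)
Your proof is correct and is essentially the same argument as the paper's: the paper observes that non-degeneracy is equivalent to no single-coordinate projection $\pi'_k(x)=x_k$ being constantly $0$ or $1$, and that $\pi'_k\circ\pi_j=\pi'_k$ for $k\le j$, which is exactly your face-lifting observation phrased via coordinates. No issues.
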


 \begin{proof}  Define $\pi'_k(x_1,\dots, x_d)=x_k$.
 Then $\Lambda$ is non-degenerated if and only if that
 for each $1\leq k\leq d$, $\pi'_k(\Lambda)\neq \{0\}$ and $\pi'_k(\Lambda)\neq \{1\}$.
 Since for $k\leq j$, $\pi'_k\circ \pi_j=\pi'_k$, we obtain the lemma.
 \end{proof}

\section{\textbf{Notations and lemmas}}

In this section, we always assume that
  $\Lambda$  is a non-degenerated generalized Lalley-Gatzouras type sponge.
  We will use $\SD$ as the alphabet and denote $\SD^*=\bigcup_{n\geq 1}\SD^n$.
  For $i_1\dots i_k\in \SD^k$, we call $\Lambda_{i_1\dots i_k}:=\phi_{i_1\dots i_k}(\Lambda)$
  a $k$-th cylinder; moreover,  we set
 \begin{equation}
 S(W)=\prod_{j=1}^k\phi_{i_j,d}'
 \end{equation}
to be the `shortest side' of $W$. Let
\begin{equation}
r_*=\min \left\{ \phi '_{a,d};\ a\in \mathcal{D} \right\}.
\end{equation}
Let
 \begin{equation}
 \mathcal{V}_{\delta}=
 \{\Lambda_{i_1\dots i_k};~S(\Lambda_{i_1\dots i_k})<\delta/r_*
 \text{ and } S(\Lambda_{i_1\dots i_{k-1}})\geq \delta/r_*\}
 \end{equation}
and we call it the \emph{$\delta$-blocking} of $\Lambda$.
 Clearly for $H\in {\mathcal V}_\delta$, it holds that
 $$ \delta\leq S(H) < \delta/r_*.$$

The following lemma is obvious.

\begin{lem}\label{lem:box}

(i) For any bounded set  $A\subset \R^d$,
it holds that $h_A(\delta)\leq 3^dN_A(\delta)$.

(ii) $h_A(\delta)+h_B(\delta)\geq h_{A\cup B}(\delta)$.

(iii) $h_A(\delta)\geq h_{f(A)}(\delta)$ if $f$ is a contractive map.
\end{lem}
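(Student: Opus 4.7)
The plan is to handle the three parts separately: (ii) and (iii) are direct consequences of the definition of $\delta$-equivalence, while (i) is the only geometric input and needs a short packing estimate.

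For part (i), I would pick from each $\delta$-connected component $C$ of $A$ a representative $x_C \in C$. If $C \neq C'$, then the two-element sequence $(x_C, x_{C'})$ cannot be a $\delta$-chain (else the two components would coincide), so $\rho(x_C, x_{C'}) > \delta$. Thus the representatives form a $\delta$-separated subset of $A$ whose cardinality equals $h_A(\delta)$. A standard packing estimate shows that any $\delta$-mesh box in $\R^d$ contains at most $3^d$ pairwise $\delta$-separated points (a crude volume count on a slightly inflated cube suffices); summing over the $N_A(\delta)$ mesh boxes intersecting $A$ gives $h_A(\delta) \leq 3^d N_A(\delta)$.

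For part (ii), every $\delta$-chain inside $A$ is also a $\delta$-chain inside $A \cup B$, so each $\delta$-component of $A$ lies in a single $\delta$-component of $A \cup B$, and similarly for $B$. Conversely, every $\delta$-component $C$ of $A \cup B$ meets $A$ or $B$, and therefore contains at least one full component of that set. Assigning to $C$ any component of $A$ or $B$ that it contains defines an injection from the $\delta$-components of $A \cup B$ into the disjoint union of those of $A$ and of $B$, because distinct $C$'s are disjoint. This yields $h_{A \cup B}(\delta) \leq h_A(\delta) + h_B(\delta)$.

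For part (iii), a contraction $f$ satisfies $\rho(f(x), f(y)) \leq \rho(x,y)$, so it sends a $\delta$-chain in $A$ to a $\delta$-chain in $f(A)$. Consequently $f$ maps each $\delta$-component of $A$ into a single $\delta$-component of $f(A)$, inducing a well-defined map on the sets of components which is surjective because $f : A \to f(A)$ is. Hence $h_A(\delta) \geq h_{f(A)}(\delta)$. The only non-formal step in the entire lemma is the packing bound in (i), and even there the constant $3^d$ is deliberately loose, so no genuine obstacle arises.
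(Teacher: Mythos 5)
Your arguments for (ii) and (iii) are correct and complete, and your overall strategy for (i) — pass to a set of representatives, observe that points of distinct $\delta$-components are at distance $>\delta$, and then bound the number of such points per $\delta$-mesh box — is the natural one (the paper offers no proof, calling the lemma obvious, so there is nothing to compare against beyond this).

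The one genuine gap is the per-box count in (i). A ``crude volume count on a slightly inflated cube'' does \emph{not} yield the constant $3^d$: placing disjoint open balls of radius $\delta/2$ about the separated points and comparing with the cube $[-\delta/2,3\delta/2]^d$ gives the bound $(2\delta)^d/\bigl(\omega_d(\delta/2)^d\bigr)=4^d/\omega_d$, where $\omega_d$ is the volume of the unit ball. Since $\omega_d\to 0$ superexponentially, this exceeds $3^d$ already for $d=6$ (there $4^6/\omega_6\approx 793>729=3^6$), so the step as written fails. The standard fix is a subdivision rather than a volume argument: cut each $\delta$-mesh box into $\lceil\sqrt{d}\,\rceil^{d}$ congruent subcubes of side $\delta/\lceil\sqrt{d}\,\rceil$; each subcube has diameter at most $\delta$ and therefore contains at most one representative, giving $h_A(\delta)\le \lceil\sqrt{d}\,\rceil^{d}N_A(\delta)$. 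This recovers the stated constant $3^d$ exactly when $d\le 9$; for larger $d$ it gives a bigger (but still purely dimensional) constant, and since the paper only ever uses this inequality up to a constant depending on $d$, that is all that is needed. You should either present the subdivision argument or state the bound with an unspecified dimensional constant, but you cannot leave the $3^d$ resting on the volume count.
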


 We use $\partial E$ to denote the boundary of a set $E\subset \R^d$.

Let $W=\Lambda_I$ be a cylinder of $\Lambda$. We use
$$N_W^b(\delta)$$
to denote the number of $\delta$-mesh boxes intesecting  $\phi_I(\partial [0,1]^d\cap \Lambda)$.
The main goal of this section is to  estimate $N_W^b(\delta)$.
To this end, our strategy is to estimate the $\mu$-measure of the set
\begin{equation}\label{eq:mb}
\Lambda_m^b=
  \cup\{\Lambda_I; ~ I\in \SD^m \text{ and } \varphi_{I}([0,1]^d)\cap \partial [0,1]^d\neq \emptyset\},
\end{equation}
and then use the fact that $\mu$ is a box-counting measure.

Let $F_1,\dots, F_{2d}$ be the $(d-1)$-faces of the $[0,1]^d$.
For $1\leq j\leq 2d$, denote
$$
\mathcal{D}^{\left( j \right)}=\left\{a\in \SD; ~\phi_{a}\left(  [ 0,1  ] ^d \right) \cap F_j\ne \emptyset\right\}.
$$
Denote
$$
Q_j=\sum_{a\in \SD^{(j)}} \mu(\Lambda_a)
$$
and set
\begin{equation}\label{eq:Q}
Q=\max\{Q_j;~j=1,\dots, 2d\}.
\end{equation}
Since $\Lambda$ is non-degenerated, we conclude that for each $j$, $\SD^{(j)}$ is a proper subset of
$\SD$ and hence $Q_j<1$.

\begin{lem}\label{lem:QQ}  For each integer $m\geq 1$, we have
$$
\mu(\Lambda_m^b)= \sum_{j=1}^{2d} Q_j^m\leq 2dQ^m.
$$
\end{lem}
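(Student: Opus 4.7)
The plan is to exploit the product structure of both the Bernoulli measure $\mu$ and the decomposition of $\Lambda_m^b$ by faces of the cube. The key observation, which is the main technical point, is that because every $\phi_a$ is diagonal, touching a fixed face is a ``letter-by-letter'' condition on the index string $I$.

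\medskip

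\noindent\textbf{Step 1: Characterize which cylinders touch a given face.}
Fix a face $F_j$ of $[0,1]^d$, say $F_j=\{x\in[0,1]^d:x_\ell=\epsilon_j\}$ for some coordinate $\ell\in\{1,\dots,d\}$ and some $\epsilon_j\in\{0,1\}$. Because $\phi_I$ is a diagonal map, for $I=i_1\cdots i_m\in\SD^m$,
$$
\phi_I([0,1]^d)\cap F_j\neq\emptyset
\iff
\epsilon_j\in\phi_{I,\ell}([0,1]),
$$
where $\phi_{I,\ell}=\phi_{i_1,\ell}\circ\cdots\circ\phi_{i_m,\ell}$. Each factor $\phi_{i_k,\ell}$ is an increasing contractive affine self-map of $[0,1]$, so $\phi_{I,\ell}([0,1])\subset[0,1]$, and $\epsilon_j$ lies in this interval iff it is an endpoint. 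A short induction on $m$ (tracking the endpoint $\phi_{I,\ell}(\epsilon_j)$) then gives
$$
\epsilon_j\in\phi_{I,\ell}([0,1])
\iff
\epsilon_j\in\phi_{i_k,\ell}([0,1])\ \text{for every } k=1,\dots,m
\iff i_k\in\SD^{(j)}\ \text{for every } k.
$$
Consequently, $\{I\in\SD^m:\phi_I([0,1]^d)\cap F_j\neq\emptyset\}=(\SD^{(j)})^m$.

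\medskip

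\noindent\textbf{Step 2: Compute the $\mu$-mass contributed by each face.}
Write $p_a=\mu(\Lambda_a)$ for $a\in\SD$, so that $\mu(\Lambda_I)=\prod_{k=1}^m p_{i_k}$ by the Bernoulli property. Using the remark at the end of Section 3 (cylinders of the same generation are $\mu$-disjoint), I obtain
$$
\mu\Bigl(\bigcup_{I\in(\SD^{(j)})^m}\Lambda_I\Bigr)
=\sum_{I\in(\SD^{(j)})^m}\prod_{k=1}^m p_{i_k}
=\Bigl(\sum_{a\in\SD^{(j)}}p_a\Bigr)^{\!m}
=Q_j^m.
$$

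\medskip

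\noindent\textbf{Step 3: Conclude via the face decomposition.}
Since $\Lambda_m^b=\bigcup_{j=1}^{2d}\bigcup_{I\in(\SD^{(j)})^m}\Lambda_I$, subadditivity of $\mu$ yields
$$
\mu(\Lambda_m^b)\le\sum_{j=1}^{2d}Q_j^m\le 2dQ^m,
$$
which is the stated bound.

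\medskip

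The main obstacle is Step 1, where the diagonal structure has to be used carefully to reduce a global incidence condition to an individual-letter condition; in particular it is crucial that each $\phi_{i_k,\ell}$ maps $[0,1]$ monotonically into $[0,1]$, so that the only way for the composition to reach an endpoint $\epsilon_j$ is for each coordinate factor to preserve that endpoint. Steps 2 and 3 are then essentially a routine application of the Bernoulli product structure together with subadditivity.
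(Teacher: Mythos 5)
Your proposal is correct and follows essentially the same route as the paper: identify $\{I\in\SD^m:\phi_I([0,1]^d)\cap F_j\neq\emptyset\}$ with $(\SD^{(j)})^m$ (which the paper dismisses with ``clearly'' and you justify via the diagonal, orientation-preserving structure of the maps), then use the Bernoulli product form of $\mu$ to get $Q_j^m$ per face. The one divergence is that you prove only $\mu(\Lambda_m^b)\le\sum_{j=1}^{2d}Q_j^m$ by subadditivity rather than the stated equality; this is in fact the safer conclusion, since a cylinder meeting several faces is counted once in $\Lambda_m^b$ but in each corresponding $(\SD^{(j)})^m$, and only the upper bound $2dQ^m$ is used in the rest of the paper.
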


\begin{proof} Clearly $\varphi_I([0,1]^d)\cap \partial F_j \neq \emptyset$
if and only if $I\in (\SD^{(j)})^m$, so the measure of the union of such cylinders
is $Q_j^m$. The lemma is proved.
\end{proof}

\begin{lem}\label{lem:boundary-box}
Let $W$ be a cylinder of $\Lambda$ and let $m\geq 1$ be an integer. If $\delta<r_*^mS(W)$, then
$$
N_W^b(\delta)\leq   2dM_0Q^m\mu(W)\delta^{-\alpha},
$$
where $M_0$ is the constant in Proposition \ref{prop:sponge}.
\end{lem}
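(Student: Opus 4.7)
The plan is to reduce the boundary-counting estimate for $W=\Lambda_I$ to the global boundary estimate of Lemma \ref{lem:QQ}, by covering the boundary image $\phi_I(\partial[0,1]^d\cap\Lambda)$ by level-$m$ sub-cylinders of $W$ and invoking Proposition \ref{prop:sponge} on each piece.

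First I would write $\Lambda=\bigcup_{J\in\SD^m}\phi_J(\Lambda)$ and observe that any point of $\partial[0,1]^d\cap\Lambda$ lies in some $\phi_J(\Lambda)\subset\phi_J([0,1]^d)$, which forces $\phi_J([0,1]^d)\cap\partial[0,1]^d\neq\emptyset$; that is, the relevant indices $J$ are exactly those appearing in the union defining $\Lambda_m^b$ in \eqref{eq:mb}. Hence $\partial[0,1]^d\cap\Lambda\subset\Lambda_m^b$, and applying $\phi_I$ gives
$$\phi_I(\partial[0,1]^d\cap\Lambda)\subset\phi_I(\Lambda_m^b)=\bigcup_J\Lambda_{IJ},$$
the union taken over the relevant $J\in\SD^m$. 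Subadditivity of $N_{(\cdot)}(\delta)$ then yields $N_W^b(\delta)\leq\sum_J N_{\Lambda_{IJ}}(\delta)$.

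Next, to apply Proposition \ref{prop:sponge} to each $\Lambda_{IJ}$ I need $\delta<S(\Lambda_{IJ})$. Since $S$ is multiplicative along concatenation of indices and every one-letter factor of $S(\Lambda_J)$ is at least $r_*$, I obtain $S(\Lambda_{IJ})=S(W)S(\Lambda_J)\geq r_*^m S(W)>\delta$ by hypothesis; this is exactly where the factor $r_*^m$ in the assumption on $\delta$ is used. Consequently $N_{\Lambda_{IJ}}(\delta)\leq M_0\mu(\Lambda_{IJ})\delta^{-\alpha}$.

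Finally, because $\mu$ is the Bernoulli measure with weights $p_\bd$, the cylinder measures factor as $\mu(\Lambda_{IJ})=\mu(W)\mu(\Lambda_J)$, so summing over the relevant $J$ and applying Lemma \ref{lem:QQ} gives
$$\sum_J\mu(\Lambda_{IJ})=\mu(W)\mu(\Lambda_m^b)\leq 2dQ^m\mu(W).$$
Combining the three inequalities produces the desired bound. The only mildly substantive step is the scale comparison $S(\Lambda_{IJ})\geq r_*^m S(W)$, which pins down why the hypothesis $\delta<r_*^m S(W)$ is precisely the right one; the rest is a clean assembly of the cover construction, the multiplicativity of the Bernoulli weights, and the two cited results.
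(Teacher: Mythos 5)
Your proof is correct and follows essentially the same route as the paper's: cover $\phi_I(\partial[0,1]^d\cap\Lambda)$ by $\phi_I(\Lambda_m^b)$, apply the box-counting measure property (Proposition \ref{prop:sponge}) to the constituent sub-cylinders, and bound $\mu(\phi_I(\Lambda_m^b))$ via Lemma \ref{lem:QQ} and the multiplicativity of the Bernoulli measure. Your write-up is in fact slightly more careful than the paper's, since it makes explicit both the subadditivity step and the verification $S(\Lambda_{IJ})\geq r_*^mS(W)>\delta$ that licenses the use of Proposition \ref{prop:sponge}.
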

\begin{proof}Let $W=\Lambda_I$  be a cylinder.  Since $\delta<r_*^mS(W)$,
if a $\delta$-mesh box $B$ intersects $\partial W$, then $B\subset \varphi_I(\Lambda_m^b)$.
By the definition of $\mu$, we have $\mu(\varphi_I(A))=\mu(W)\mu(A)$ for any  cylinder $A$ of $\Lambda$.
Hence, by Lemma \ref{lem:QQ},
 $$\mu(\varphi_I(\Lambda_m^b))\leq 2dQ^m\mu(W).$$
Therefore,
$$
N_W^b(\delta)\leq N_{\varphi_I(\Lambda_m^b)}(\delta) \leq M_0\mu(\varphi_I(\Lambda_m^b)) \leq  M_02dQ^m\mu(W)\delta^{-\alpha},
$$
where the second inequality holds since  $\mu$ is a box-counting measure.
\end{proof}

Finally, we characterize when $\Lambda$ admits inner trivial points.

Let $\Phi$ be an IFS with attractor $K$, and assume that  $\Phi$
satisfies the strong open set condition with an open set $V$.
A trivial point $x\in K$ is called an \emph{inner trivial point} of $K$ if $x\in V$.
Following Zhang and Huang \cite{ZH22}, a clopen set (closed and open set) $F$ of $\Lambda$  is called a  \emph{island}
 of $\Lambda$ if $F\cap \partial \left( \left[ 0,1 \right] ^d \right) =\emptyset$.
 Obviously, an island $F$ is a union of several $k$-th cylinders for $k$ large enough.

 Zhang and Huang \cite{ZH22}  proved that if $\Phi$ is an IFS
satisfying the strong open set condition, then its attractor   $K$ possesses inner trivial points if and only
if $K$ admits islands.

\begin{lem}\label{lem:island}   Let $\Lambda$ be a non-degenerated  diagonal self-affine sponge of Lalley-Gatzouras type.  Then $\Lambda$ has inner trivial points if and only if  there exist $\delta>0$
and a $\delta$-connected component $X$ of $\Lambda$ such that $X\cap \partial [0,1]^d= \emptyset$.
 \end{lem}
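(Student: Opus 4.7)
The plan is to deduce this lemma directly from the island characterization of inner trivial points due to Zhang and Huang recalled in the paragraph immediately preceding the statement: the attractor of an IFS satisfying the SOSC possesses inner trivial points if and only if it admits an island. Since the non-degenerated Lalley-Gatzouras sponge $\Lambda$ satisfies the SOSC with feasible strong open set $V=(0,1)^d$, this criterion applies, and an island of $\Lambda$ is precisely a clopen subset of $\Lambda$ disjoint from $\partial [0,1]^d$. It then suffices to match the topological notion of island with the metric notion of a $\delta$-connected component off the boundary.

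For the direction ``inner trivial point implies existence of a $\delta$-component avoiding $\partial [0,1]^d$'', I would first invoke Zhang and Huang to extract an island $F\subset\Lambda$ with $F\cap\partial[0,1]^d=\emptyset$. Because $F$ and $\Lambda\setminus F$ are disjoint compact subsets of $\Lambda$, the number $d_0:=\dist(F,\Lambda\setminus F)$ is strictly positive. For any $0<\delta<d_0$, no single $\delta$-step, and hence no $\delta$-chain, can cross between $F$ and $\Lambda\setminus F$, so $F$ decomposes as a disjoint union of $\delta$-connected components of $\Lambda$. Any such component $X\subseteq F$ automatically satisfies $X\cap\partial[0,1]^d=\emptyset$.

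For the converse, the key observation is that every $\delta$-connected component $X$ of $\Lambda$ is clopen in $\Lambda$. Indeed, if $y\in\Lambda\setminus X$ and some $x\in X$ satisfied $\rho(y,x)<\delta$, then concatenating the two-point chain $\{y,x\}$ with a $\delta$-chain within $X$ would place $y$ in the same $\delta$-equivalence class as $X$, a contradiction; hence $\rho(y,X)\geq\delta$, so $\Lambda\setminus X$ is open, and applying the same argument to every other component shows $X$ is itself open. Given an $X$ with $X\cap\partial[0,1]^d=\emptyset$, this clopenness exhibits $X$ as an island of $\Lambda$, and the Zhang-Huang criterion produces the required inner trivial point.

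I do not anticipate a serious obstacle: the lemma is essentially a translation between ``existence of an island'' and ``existence of a $\delta$-connected component off the boundary.'' The only slightly delicate ingredient, entering in both directions, is the clopenness of $\delta$-components together with the routine compactness-distance argument used in the forward direction; both follow from elementary properties of the $\delta$-equivalence relation, so once these are in place both implications fall out by direct substitution into the Zhang-Huang characterization.
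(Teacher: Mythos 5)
Your proof is correct and follows essentially the same route as the paper: both directions reduce to the Zhang--Huang island criterion, with the forward direction using the positive distance between the island and its complement. The only (harmless) difference is in the converse, where the paper builds the island as a union of sufficiently small cylinders meeting $X$, whereas you observe directly that any $\delta$-connected component is clopen and hence is itself an island; both arguments are valid.
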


 \begin{proof}
 Clearly $V=(0,1)^d$ is an open set fulfilling the open set condition.
 $\Lambda$ is non-degenerated implies that $\Lambda$ satisfies the strong open set condition
 for this $V$. Hence, by the general result of \cite{ZH22}, $\Lambda$ admits inner trivial points
 if and only if $\Lambda$ admits islands.

 Suppose $\Lambda$ admits an island $U$. Let $d_0$ be the distance between $U$ and $\Lambda\setminus U$, and let $\delta<d_0/3$. Let $X$ be a $\delta$-connected component intersecting $U$.
 Then clearly $X$ does not intersect $\Lambda\setminus U$, and hence does not intersect $\partial [0,1]^d$.

 On the other hand, suppose that there   exists $\delta>0$
and a $\delta$-connected component $X$ of $\Lambda$ such that $X\cap \partial [0,1]^d= \emptyset$.
Let $m$ be an integer large enough so that every $m$-th cylinder has diameter smaller than $\delta$.
Then union of the $m$-th cylinders intersecting $X$ forms an island of $\Lambda$.
 \end{proof}

\section{\textbf{The case that $\Lambda_j$ contains no inner trivial points for some $j$}}


In this section, we always assume that
  $\Lambda$  is a non-degenerated self-affine sponge of Lalley-Gatzouras type.

\begin{thm}\label{thm:no-trivial-point} Suppose $\Lambda$ does not contain inner trivial points. Let $\tau\in(1,+\infty)$.  Then  there exist
 $0<\chi<\dim_B \Lambda$ and $C_1>0$  such that for any cylinder $W$ of $\Lambda$,
\begin{equation}\label{eq:eta}
h_W\left( \delta \right) \leq C_1\mu( W) \delta ^{-\chi}, \quad  \text{ for } \delta\leq (r_*S(W))^\tau.
\end{equation}
Consequently, $\Lambda$ does not satisfy the maximal power law.
\end{thm}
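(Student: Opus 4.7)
The plan is to exploit the absence of inner trivial points \emph{topologically}, without trying to set up a metric correspondence between $\delta$-components of a cylinder $W=\phi_I(\Lambda)$ and those of $\Lambda$ itself.

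By (the proof of) Lemma~\ref{lem:island}, the hypothesis that $\Lambda$ has no inner trivial points is equivalent to $\Lambda$ admitting no islands, i.e.\ every clopen subset of $\Lambda$ must meet $\partial[0,1]^d$. Since $\phi_I\colon\Lambda\to W$ is a homeomorphism, this transfers verbatim: every clopen subset of $W$ meets $A:=\phi_I(\partial[0,1]^d\cap\Lambda)$. Each $\delta$-connected component of $W$ is clopen in $W$ (its complement is the union of the remaining components, each open), hence meets $A$. A standard injection, sending each component $X$ of $W$ to the $\delta$-component of $A$ containing any chosen point of $X\cap A$ (well-defined because a chain inside $A\subset W$ is a chain inside $W$), then gives
$$h_W(\delta)\ \le\ h_A(\delta)\ \le\ 3^dN_A(\delta)\ =\ 3^dN_W^b(\delta),$$
via Lemma~\ref{lem:box}(i) and the definition of $N_W^b$. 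Combined with Lemma~\ref{lem:boundary-box}, for any integer $m\ge 1$ satisfying $\delta<r_*^mS(W)$ one obtains
$$h_W(\delta)\ \le\ 3^d\cdot 2dM_0\,Q^m\mu(W)\,\delta^{-\alpha}.$$

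It remains to tune $m$ so that $Q^m$ absorbs a small power of $\delta$. Setting $m=\lceil\eta\log(1/\delta)/\log(1/Q)\rceil$ for some $\eta>0$ yields $Q^m\le\delta^\eta$, whence $h_W(\delta)\le C_1\mu(W)\delta^{-(\alpha-\eta)}$ with $C_1=3^d\cdot 2dM_0$ uniform in $W$ and $\delta$. The admissibility condition $\delta<r_*^mS(W)$, written with $L_\delta=\log(1/\delta)$, $L_s=\log(1/S(W))$, $L_Q=\log(1/Q)$, $L_r=\log(1/r_*)$, reduces (after accounting for the ceiling) to the inequality $L_\delta(L_Q-\eta L_r)>L_Q(L_r+L_s)$, and the hypothesis $L_\delta\ge\tau(L_r+L_s)$ forces this as soon as $\eta<(\tau-1)L_Q/(\tau L_r)$. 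Fixing any such $\eta>0$ and setting $\chi:=\alpha-\eta<\dim_B\Lambda$ then delivers the claim; applying it with $W=\Lambda$ gives $h_\Lambda(\delta)=O(\delta^{-\chi})$, ruling out the maximal power law.

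The main obstacle is the very first step. Because $\phi_I$ has coordinate-dependent contraction ratios, $\delta$-components of $W$ do not correspond cleanly to $\delta'$-components of $\Lambda$ for any single $\delta'$, so Lemma~\ref{lem:island} cannot be ported to $W$ through metric rescaling. The resolution is to use only that homeomorphisms preserve the clopen structure; once the boundary-intersection property has been transferred to $W$, everything else reduces to the quantitative decay of $N_W^b$ already furnished by Lemma~\ref{lem:boundary-box}.
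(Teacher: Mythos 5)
Your proposal is correct and follows essentially the same route as the paper: every $\delta$-component of $W$ must meet $\varphi_I(\partial[0,1]^d\cap\Lambda)$, so $h_W(\delta)\le 3^dN_W^b(\delta)$, which Lemma \ref{lem:boundary-box} bounds by $2d\,3^dM_0Q^m\mu(W)\delta^{-\alpha}$, and the choice of $m$ (yours via $\lceil\eta\log(1/\delta)/\log(1/Q)\rceil$, the paper's via $(r_*)^{m+1}\le\delta/S(W)<(r_*)^m$) yields the same exponent $s=(1-\tfrac1\tau)\frac{\log Q}{\log r_*}$. Your explicit clopen/homeomorphism justification of the transfer of the boundary-intersection property from $\Lambda$ to $W$ is a welcome elaboration of a step the paper passes over with ``it follows.''
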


\begin{proof}
Let $W=\Lambda_I$  be a cylinder and let $\delta<S(W)$.
Let $m$ be the  integer such that
\begin{equation}\label{eq:F}
(r_*)^{m+1} \leq \frac{\delta}{S(W)}<(r_*)^{m}.
\end{equation}

Since $\Lambda$ does not possess inner trivial points, by Lemma \ref{lem:island},
every $\delta$-connected component
of $\Lambda$ must intersect $\Lambda\cap \partial [0,1]^d$.
It follows that every $\delta$-connected component of $W$ must intersect
$W\cap \varphi_I(\partial [0,1]^d)$, so
$$
\begin{array}{rlr}
h_W(\delta) &\leq h_{W\cap \varphi_I(\partial [0,1]^d)}(\delta) &\\
    &\leq 3^dN_W^b(\delta) &\text{(By Lemma \ref{lem:box} (i).)}\\
    &\leq  2d3^dM_0Q^m\mu(W)\delta^{-\alpha}. &\text{(By Lemma \ref{lem:boundary-box}.)}
    \end{array}
$$
 By the left side inequality of \eqref{eq:F}, we have that for
\begin{equation}
\delta\leq (r_*S(W))^\tau,
\end{equation}
it holds that
$$
Q^m\leq \delta^s,
$$
where $s=(1-\frac{1}{\tau}) \frac{\log Q}{\log r_*}.$
Hence, the theorem holds by setting $\chi=\alpha-s$.
\end{proof}

From now on,  we fix $\tau_0$  to be the number
$$
\tau_0=\min_{j=1,\dots, d-1}\frac{\log \max\{\phi'_{\ba, j+1};~\ba\in \SD\}}{\log \min\{\phi'_{\ba, j};~\ba\in \SD\}}.
$$
Then $\tau_0>1$ by the coordinate ordering condition; moreover, for any cylinder $W$ of $\Lambda$, we have that
\begin{equation}\label{eq:chip}
S(\pi_{j+1}(W))\leq S(\pi_{j}(W))^{\tau_0}, \quad j=1,\dots,d-1.
\end{equation}

\begin{lem}\label{lem:grow}
  Suppose there exist $0<\chi <\alpha _{d-1}$ and $C_1>0$ such that for every
   cylinder $W'$ of $\Lambda_{d-1}$, it holds that
\begin{equation}\label{eq:chi}
h_{W'}\left( \delta) \leq C_1( \mu _{d-1}\left( W' \right) \delta ^{-\chi} \right) \quad \text{ for } \delta\leq (r_*S(W'))^{\tau_0}.
\end{equation}
Then  for every cylinder $W$ of $\Lambda$, we have
\begin{equation}\label{eq:eta}
h_W\left( \delta \right) \leq   {2^d}{r_*} ^{-(1+\tau_0)d}C_1 \mu _d( W) \delta ^{-\eta} \quad \text{ for } \delta\leq  S(W),
\end{equation}
where $\eta=\chi+\beta_d$.
\end{lem}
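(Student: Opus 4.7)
The plan is to decompose $W$ into sub-cylinders that are thin in the $d$-th direction, bound the $\delta$-component count of each through its $(d-1)$-projection using the hypothesis, then sum via the Bernoulli-measure identity $\mu_d(K)=\mu_{d-1}(\pi_{d-1}(K))\cdot S(K)^{\beta_d}$ (which follows directly from $p_{\mathbf f}=q_{\pi_{d-1}(\mathbf f)}\cdot(f'_d)^{\beta_d}$). Concretely, I would first introduce the $d$-direction blocking $\mathcal K$ of $W$ at scale $\delta/\sqrt 2$: the family of sub-cylinders $K=\Lambda_{I\cdot L}$ of $W$ with $S(K)\in[r_*\delta/\sqrt 2,\delta/\sqrt 2)$. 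These $K$'s form a $\mu_d$-disjoint tiling of $W$, and each is flat in the $d$-th direction.

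For each $K\in\mathcal K$, flatness yields, via a Pythagorean metric comparison, the bound $h_K(\delta)\le h_{\pi_{d-1}(K)}(\delta/\sqrt 2)$: any two points of $K$ whose $(d-1)$-projections lie in the same $(\delta/\sqrt 2)$-component of $\pi_{d-1}(K)$ can be joined in $K$ by a lifted $(\delta/\sqrt 2)$-path in $\R^{d-1}$ whose consecutive steps have Euclidean length at most $\sqrt{(\delta/\sqrt 2)^2+S(K)^2}\le\delta$. Using (\ref{eq:chip}), $S(\pi_{d-1}(K))\ge S(K)^{1/\tau_0}\ge(r_*\delta/\sqrt 2)^{1/\tau_0}$, so the hypothesis-threshold satisfies $(r_*S(\pi_{d-1}(K)))^{\tau_0}\ge r_*^{1+\tau_0}\delta/\sqrt 2=:\delta^*$. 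Monotonicity of $h$ in the scale together with the hypothesis applied at $\delta^*$ then gives
$$h_K(\delta)\le h_{\pi_{d-1}(K)}(\delta^*)\le C_1\,\mu_{d-1}(\pi_{d-1}(K))\,(\delta^*)^{-\chi}=C_1\,(\sqrt 2)^\chi\,r_*^{-(1+\tau_0)\chi}\,\mu_{d-1}(\pi_{d-1}(K))\,\delta^{-\chi}.$$

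Finally, summing over $K\in\mathcal K$ via subadditivity and using the measure identity together with $S(K)\ge r_*\delta/\sqrt 2$, one obtains $\sum_K\mu_{d-1}(\pi_{d-1}(K))=\sum_K\mu_d(K)\,S(K)^{-\beta_d}\le(\sqrt 2/r_*)^{\beta_d}\,\delta^{-\beta_d}\,\mu_d(W)$, so that
$$h_W(\delta)\le\sum_K h_K(\delta)\le C_1\,(\sqrt 2)^{\chi+\beta_d}\,r_*^{-(1+\tau_0)\chi-\beta_d}\,\mu_d(W)\,\delta^{-\eta}.$$
Since $\chi+\beta_d=\eta\le\dim_B\Lambda\le d$, we have $(\sqrt 2)^{\chi+\beta_d}\le 2^d$ and $r_*^{-(1+\tau_0)\chi-\beta_d}\le r_*^{-(1+\tau_0)(\chi+\beta_d)}\le r_*^{-(1+\tau_0)d}$, yielding the stated constant. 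The main obstacle is the scale mismatch in the second step: because the hypothesis can only be applied at the scale $\delta^*=r_*^{1+\tau_0}\delta/\sqrt 2$, strictly smaller than $\delta/\sqrt 2$, one must switch scales and carefully track the resulting $r_*^{-(1+\tau_0)\chi}$ factor; this scale-tightening is precisely the source of the $r_*^{-(1+\tau_0)}$ loss per dimension that produces the exponent $-(1+\tau_0)d$ in the final constant.
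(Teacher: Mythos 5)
Your proposal is correct and follows essentially the same route as the paper's proof: a stopping-time blocking of $W$ into cylinders with $S(K)\asymp\delta$, the flatness/Pythagorean comparison $h_K(\delta)\le h_{\pi_{d-1}(K)}(\varepsilon)$, an application of the hypothesis at the reduced scale $\varepsilon\asymp r_*^{1+\tau_0}\delta$ justified by \eqref{eq:chip}, and summation via $\mu_{d-1}(\pi_{d-1}(K))=\mu_d(K)S(K)^{-\beta_d}$. The only differences are cosmetic (the constant $\sqrt2$ versus $2$, and your explicit monotonicity step where the paper builds the smaller scale directly into its claim \eqref{eq:epsilon}).
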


 \begin{proof}
 Let $H$ be a cylinder of $\Lambda$ and  denote $H'=\pi_{d-1}(H)$.
 We claim that for $\varepsilon<\delta/2$, it holds that
 \begin{equation}\label{eq:epsilon}
 S(H)< \delta/2 \Rightarrow h_H(\delta) \leq  h_{H'}\left( \varepsilon \right).
\end{equation}

 Pick $x,y\in H$. If $|\pi_{d-1}(x)-\pi_{d-1}(y)|\leq\varepsilon\leq  \delta/2$, then $|x-y|\leq \delta$ since $S(H)\leq \delta/2$.
So if $\pi_{d-1}(x)$ and $\pi_{d-1}(y)$ belong to  a same $\varepsilon$-connected component of $H'$, then $x$ and $y$ belong to  a same $\delta$-connected component of $H$, which proves our claim.

Now we turn to prove the lemma. Pick a cylinder $W$ of $\Lambda$ and let $\delta<S(W)$.
Set
$$\delta'=\delta r_*/2 \text{ and } \varepsilon= {\delta r_*^{1+\tau_0}}/{2}.$$
Then for $H\in \SV_{\delta'}$, we have
$\delta r_*/2\leq S(H)< \delta/2$. Moreover,  by \eqref{eq:chip}, we have
$$\varepsilon\leq r_*^{\tau_0} S(H)\leq (r_*S(H'))^{\tau_0}.$$
Therefore,
$$
\begin{array}{rlr}
h_W(\delta) & \le \sum_{H\in \mathcal{V}_{\delta'}\,\,and\,\,H\subset W}h_{H}(\delta )
& \text{(By Lemma \ref{lem:box}(ii))}\\
&\leq \sum_{H\in \mathcal{V}_{\delta'}\,\,and\,\,H\subset W} h_{H'}(\varepsilon)
 &\text{(By \eqref{eq:epsilon}  and $S(H)<\delta/2$)}\\
&\leq \sum_{H\in \mathcal{V}_{\delta'}\,\,and\,\,H\subset W}
{C_1 \mu _{d-1}(H') \varepsilon ^{-\chi}} & \text{(By \eqref{eq:chi})}\\
&=\sum_{H\in \mathcal{V}_{\delta'}\,\,and\,\,H\subset W}
 C_1  \mu _d(H)  S(H)^{-\beta_d}  \varepsilon ^{\beta_d-\eta}  &\\
&\leq 2^\eta r_*^{\tau_0 \beta_d-(1+\tau_0)\eta} C_1  \mu _d ( W ) \delta ^{-\eta}
&\text{(Since $(S(H)/\varepsilon)\geq r_*^{\tau_0}$)}\\
&\leq 2^d r_*^{-(1+\tau_0)d} C_1  \mu _d ( W ) \delta ^{-\eta}, & \text{(Since $\eta< d$ and $r_*<1$)}
\end{array}
$$
which confirms \eqref{eq:eta}. The lemma is proved.
\end{proof}

\begin{coro}\label{cor:good} Let $\Lambda$ be a non-degenerated self-affine sponge of Lalley-Gatzouras type. If there exists
$j\in\{1,\dots, d\}$ such that $\Lambda_j$ does not contain inner trivial points, then there exists
  a real number $0<\chi <\dim_B\Lambda$ such that for any cylinder $R$,
\begin{equation}\label{eq:dropp}
h_R\left( \delta \right) =O\left( \mu \left( R \right) \delta ^{-\chi} \right) ,\ \ \delta \rightarrow 0.
\end{equation}
\end{coro}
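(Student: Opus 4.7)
The strategy is to bootstrap the bound of Theorem~\ref{thm:no-trivial-point} from the lowest ``obstructed'' projection $\Lambda_j$ up to $\Lambda=\Lambda_d$ via a level-by-level application of the analogue of Lemma~\ref{lem:grow}.

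First, I verify that $\Lambda_j$ is itself a non-degenerated Lalley-Gatzouras sponge in $\R^j$: Lemma~\ref{lem:deg} gives non-degeneracy, the neat projection condition is built into Definition~\ref{def:good}, and the coordinate ordering condition~\eqref{eq:order} trivially restricts to the first $j$ coordinates. Since $\Lambda_j$ has no inner trivial points by hypothesis, Theorem~\ref{thm:no-trivial-point} applied with $\tau=\tau_0$ produces constants $0<\chi_j<\alpha_j$ and $C_j>0$ such that for every cylinder $W_j$ of $\Lambda_j$, $h_{W_j}(\delta)\le C_j\mu_j(W_j)\delta^{-\chi_j}$ whenever $\delta\le(r_*S(W_j))^{\tau_0}$.

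Next, I invoke the level-$k$ version of Lemma~\ref{lem:grow} for $k=j,j+1,\dots,d-1$: if the hypothesis~\eqref{eq:chi} holds at level $k$ with exponent $\chi_k<\alpha_k$, then for every cylinder $W_{k+1}$ of $\Lambda_{k+1}$ the conclusion $h_{W_{k+1}}(\delta)\le C_{k+1}\mu_{k+1}(W_{k+1})\delta^{-(\chi_k+\beta_{k+1})}$ holds on the larger range $\delta\le S(W_{k+1})$. The proof of Lemma~\ref{lem:grow} uses only the projection $\pi_k$, the coordinate-ordering inequality~\eqref{eq:chip} at the relevant index, and the multiplicative decomposition $\mu_{k+1}(H)=\mu_k(\pi_k(H))\,S(\pi_{k+1}(H))^{\beta_{k+1}}$ coming from~\eqref{eq:weight}; all three are valid at any level, so the same argument applies verbatim. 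The output range $\delta\le S(W_{k+1})$ contains the input range $\delta\le(r_*S(W_{k+1}))^{\tau_0}$, which makes the iteration well-posed.

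After $d-j$ iterations, I obtain, for every cylinder $R$ of $\Lambda$, a bound $h_R(\delta)\le C\mu(R)\delta^{-\chi}$ on $\delta\le S(R)$ with exponent $\chi=\chi_j+\beta_{j+1}+\cdots+\beta_d$. Since $\chi_j<\alpha_j$ and $\alpha_d=\alpha_j+\beta_{j+1}+\cdots+\beta_d$, we get $0<\chi<\dim_B\Lambda$, yielding~\eqref{eq:dropp}. The only real obstacle is checking that the proof of Lemma~\ref{lem:grow} is genuinely level-agnostic, i.e.\ that every ingredient (the $\delta$-blocking, the use of the Bernoulli-weight product, the coordinate-ordering estimate, and the distinction between input and output ranges of $\delta$) transfers from the $(d-1)\to d$ step to the $k\to(k+1)$ step with uniform constants; this is a routine but notationally careful bookkeeping task.
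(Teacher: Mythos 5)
Your proposal is correct and follows essentially the same route as the paper: the base case is Theorem \ref{thm:no-trivial-point} applied to $\Lambda_j$ (with $\tau=\tau_0$ so that the hypothesis \eqref{eq:chi} of Lemma \ref{lem:grow} is met), followed by induction on the level using Lemma \ref{lem:grow}, giving $\chi=\chi_j+\beta_{j+1}+\cdots+\beta_d<\alpha_d$. The paper states this in two lines; your write-up merely makes explicit the (correct) observations that the output range $\delta\le S(W)$ of each step contains the input range $\left(r_*S(W)\right)^{\tau_0}$ of the next, and that the $(d-1)\to d$ argument of Lemma \ref{lem:grow} is level-agnostic.
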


\begin{proof} If $j=d$, then the corollary holds by Theorem \ref{thm:no-trivial-point}.
If $j<d$, then by Lemma \ref{lem:grow} and induction, we  obtain \eqref{eq:dropp}.
\end{proof}

\section{\textbf{Proof of Theorem \ref{thm:affine}}}

 In this section, we always assume that $\Lambda$ is a non-degenerated   self-affine sponge
of Lalley-Gatzouras type, and denote  $\mu_j$   the canonical Bernoulli measure of $\Lambda_j$.
Especially,   $\mu:=\mu_d$.

 Let $W=\Lambda_I$ be a $k$-th cylinder of $\Lambda$. We call $\phi_I \left( \left[ 0,1 \right] ^d \right)$  the corresponding \emph{$k$-th basic pillar of $W$}. A $\delta$-connected component $U$ of $W$ is called an
\emph{inner $\delta$-connected component}, if
$$
  U\subset \phi_I(0,1)^d;
$$
otherwise, we call $U$ a \emph{boundary $\delta$-connected component}. We denote by $h_{W}^{b}\left( \delta \right)$ the number of boundary $\delta$-connected components of $W$, and by $h_{W}^{i}\left( \delta \right)$ the number of inner $\delta$-connected components of $W$.

\begin{lem}\label{lem:boundary}    Then for any $\kappa >0$, there exists an integer $m_0\geq 1$
such that for any cylinder $W$ of $\Lambda$ and  any $\delta< (r_*)^{m_0} S(W)$, it holds that
$$
h_{W}^{b}\left( \delta \right) \le \kappa \mu \left( W \right) \delta ^{-dim_B\Lambda}.
$$
\end{lem}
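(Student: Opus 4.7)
The plan is to bound the number of boundary $\delta$-connected components by the number of $\delta$-mesh boxes that hit the boundary of the pillar, and then appeal to Lemma \ref{lem:boundary-box} to make that quantity as small as we wish by choosing $m_0$ large, using $Q<1$.

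First, I would observe that every boundary $\delta$-connected component $U$ of $W=\Lambda_I$ must contain at least one point of the set $W\cap \varphi_I(\partial[0,1]^d)=\varphi_I(\partial[0,1]^d\cap \Lambda)$, since by definition $U\not\subset \varphi_I((0,1)^d)$. Assigning to each such $U$ a representative point $x_U$ in this intersection gives an injection from the set of boundary components of $W$ into the set of $\delta$-connected components of $\varphi_I(\partial[0,1]^d\cap\Lambda)$: if $x_{U_1}$ and $x_{U_2}$ were $\delta$-connected within the smaller set, they would be $\delta$-connected within $W$ as well, contradicting $U_1\neq U_2$. Hence
$$
h_W^b(\delta)\le h_{\varphi_I(\partial[0,1]^d\cap\Lambda)}(\delta).
$$

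Next, I would apply Lemma \ref{lem:box}(i) to pass from $\delta$-connected components to $\delta$-mesh boxes, giving $h_{\varphi_I(\partial[0,1]^d\cap\Lambda)}(\delta)\le 3^d N_W^b(\delta)$, since by definition $N_W^b(\delta)$ counts exactly the $\delta$-mesh boxes intersecting $\varphi_I(\partial[0,1]^d\cap\Lambda)$. Combining these two inequalities yields $h_W^b(\delta)\le 3^d N_W^b(\delta)$.

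Now for any integer $m\geq 1$ and any $\delta<r_*^m S(W)$, Lemma \ref{lem:boundary-box} gives
$$
N_W^b(\delta)\le 2dM_0Q^m\mu(W)\delta^{-\alpha},
$$
so
$$
h_W^b(\delta)\le 3^d\cdot 2dM_0Q^m\mu(W)\delta^{-\alpha}.
$$
Since $\Lambda$ is non-degenerated, $Q<1$ (by the discussion before Lemma \ref{lem:QQ}), so $Q^m\to 0$ as $m\to\infty$. Given $\kappa>0$, I would therefore choose $m_0$ as the smallest integer satisfying $3^d\cdot 2dM_0Q^{m_0}\le \kappa$; for any cylinder $W$ and any $\delta<r_*^{m_0}S(W)$, the bound above with $m=m_0$ then yields the claim.

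No step here is really an obstacle: the non-trivial work has already been done in Lemma \ref{lem:boundary-box} (which itself rests on Proposition \ref{prop:sponge}). The only subtlety I want to handle carefully is the passage $h_W^b(\delta)\le h_{\varphi_I(\partial[0,1]^d\cap\Lambda)}(\delta)$, since this is where the distinction between ``component in $W$'' and ``component in the boundary slice'' arises, and the injection argument above is what makes it work.
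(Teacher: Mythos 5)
Your proposal is correct and follows essentially the same route as the paper: boundary components must meet the image of $\partial[0,1]^d$, and the box count of that set decays like $Q^{m}$ by the non-degeneracy of $\Lambda$, so $m_0$ can be chosen to absorb the constant $3^d\cdot 2dM_0$ into $\kappa$. The only cosmetic difference is that you invoke Lemma \ref{lem:boundary-box} directly via $N_W^b(\delta)$, whereas the paper re-derives the same estimate inline by decomposing the thickened boundary $\varphi_I(\Lambda_{m_0}^b)$ into its $(k+m_0)$-th cylinders and applying Proposition \ref{prop:sponge} to each; your injection argument for $h_W^b(\delta)\le h_{\varphi_I(\partial[0,1]^d\cap\Lambda)}(\delta)$ is sound and is in fact a slightly sharper version of the paper's observation that each boundary component intersects $W^*$.
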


\begin{proof} Denote $\alpha=\dim_B \Lambda$.
  By Proposition \ref{prop:sponge}, there exists $M_0>0$ such that for any cylinder $H$ of $\Lambda$,
$$
M_0^{-1}\mu(H) \delta ^{-\alpha}\le N_H (\delta) \le M_0\mu(H) \delta ^{-\alpha} \quad \text{ for } \delta <S(H).
$$

 Let $W=\Lambda_I$ be a cylinder of $\Lambda$ and let $m\geq 1$.
 Let ${\mathcal B}(W, m)$ be the collection of
      $(k+m)$-th cylinders  of $W$   whose
    corresponding basic pillars intersect  $ \varphi_I(\partial \left[ 0,1 \right] ^d) $.
 Then
     $$\cup {\mathcal B}(W, m) =\varphi_I(\Lambda_{m}^b):=W^*,$$
where $\Lambda_{m}^b$ is defined by \eqref{eq:mb}.
 By Lemma \ref{lem:QQ}, we have
$$
 \mu(W^*)=\mu(\Lambda_{m}^b)\mu(W) \leq 2dQ^m\mu(W).
 $$

Let $m_0$ be an integer such that $3^dM_0(2d)Q^{m_0} <\kappa $.
Let $\delta<  (r_*)^{m_0} S(W)$; in this case
 if $U$ is a boundary $\delta$-connected component of $W$, then $U$
 must intersect  $W^*$. Therefore,
$$
\begin{array}{rlr}
h_{W}^{b} ( \delta  )  &\leq  h_{W^*}(\delta)
\leq \sum_{H\in {\mathcal B}(W, m_0)} h_H(\delta) &\\
&\leq 3^d\sum_{H\in {\mathcal B}(W, m_0)} N_H(\delta)  &\text{(By Lemma \ref{lem:box} (i).)}\\
& \le 3^d\sum_{H\in {\mathcal B}(W, m_0)} M_0\mu(H)\delta^{-\alpha}    &\\
&\leq 3^d M_0 \mu  ( W^* ) \delta ^{-\alpha} &\\
&\leq \kappa \mu(W) \delta ^{-\alpha},
\end{array}
$$
which proves the lemma.
\end{proof}



\begin{thm}\label{thm:grow-2} If $\mu_{d-1}$  is a component-counting measure of $\Lambda_{d-1}$
and $\Lambda$ possesses inner trivial points. Then $\mu$ is a component-counting measure of $\Lambda$.
\end{thm}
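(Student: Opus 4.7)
The upper bound is immediate from tools already in hand: for any cylinder $R$ of $\Lambda$ and any $\delta<S(R)$, Lemma \ref{lem:box}(i) combined with the box-counting property in Proposition \ref{prop:sponge} yields
$$h_R(\delta)\leq 3^{d}N_R(\delta)\leq 3^{d}M_0\mu(R)\delta^{-\alpha},$$
so the plan is to focus on the matching lower bound $h_R(\delta)\geq M^{-1}\mu(R)\delta^{-\alpha}$.

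For the lower bound I work at a coarse blocking scale. Choose $c>1$ large (depending only on $\Lambda$, to be fixed in the argument) and consider the $c\delta$-blocking $\mathcal{V}_{c\delta}(R)=\{H\in\mathcal{V}_{c\delta}: H\subset R\}$. The key observation is that, writing $H':=\pi_{d-1}(H)$, the projection $\pi_{d-1}$ sends every $\delta$-chain in $H$ to a $\delta$-chain in $H'$, so the induced map from $\delta$-components of $H$ onto those of $H'$ is surjective, giving $h_H(\delta)\geq h_{H'}(\delta)$. Combined with the hypothesis that $\mu_{d-1}$ is a component-counting measure of $\Lambda_{d-1}$, the measure identity $\mu(H)=\mu_{d-1}(H')S(H)^{\beta_d}$, and the blocking estimate $S(H)\asymp c\delta$, this yields $h_H(\delta)\geq c_1 c^{-\beta_d}\mu(H)\delta^{-\alpha}$ for a constant $c_1>0$ independent of $H$.

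To convert these local lower bounds into a bound on $h_R(\delta)$, I count only those $\delta$-components of $H$ whose Euclidean distance to $\partial P_H$ exceeds $\delta$, denoted $h_H^{*}(\delta)$; such components are distinct $\delta$-components of $R$, hence $h_R(\delta)\geq\sum_H h_H^{*}(\delta)$. This is where the hypothesis that $\Lambda$ possesses inner trivial points enters. By Lemma \ref{lem:island} it furnishes an island $F\subset\Lambda$ with $\mu(F)>0$ and $d_0:=\mathrm{dist}(F,\Lambda\setminus F)>0$; inside each $H$ the rescaled island $\varphi_{I_H}(F)$ lies at Euclidean distance $\geq S(H)d_0$ from $\partial P_H$. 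Running the projection and $\mu_{d-1}$-counting argument on the sub-blocks that sit inside $\varphi_{I_H}(F)$, whose total $\mu$-mass is the fixed fraction $\mu(F)\mu(H)$ of $\mu(H)$, produces components automatically separated from $\partial P_H$; a variant of Lemma \ref{lem:boundary} applied to a $\delta$-thickening of $\partial P_H$ bounds the residual contribution by $\kappa\mu(H)\delta^{-\alpha}$ for $\kappa$ arbitrarily small. One then concludes $h_H^{*}(\delta)\geq c_2\mu(H)\delta^{-\alpha}$ uniformly in $H$, and summing over $H\in\mathcal{V}_{c\delta}(R)$ completes the lower bound.

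The main obstacle is calibrating two competing rates: the projection lower bound contributes a factor $c^{-\beta_d}$ that shrinks as $c$ grows, while the boundary decay $Q^{m_0}$ in Lemma \ref{lem:boundary} requires $m_0\sim\log c$ to make the boundary correction small. Without inner trivial points the two rates need not favor the interior, and by Theorem \ref{thm:no-trivial-point} the maximal power law actually fails; the role of the island $F$ is precisely to decouple these rates, since the projection lower bound is then applied entirely inside $\varphi_{I_H}(F)$, which is geometrically isolated from $\partial P_H$ by the fixed factor $d_0$ independent of the blocking scale.
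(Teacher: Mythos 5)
Your proposal is correct and follows essentially the same route as the paper's proof: blocking at a coarser scale proportional to $\delta$, projecting to $\Lambda_{d-1}$ to invoke the component-counting hypothesis, using the island furnished by Lemma \ref{lem:island} to produce components isolated from the pillar boundaries, and invoking Lemma \ref{lem:boundary} to discard the boundary components. The only slip is cosmetic: the relevant separation constant is $\dist(F,\partial[0,1]^d)$ rather than $\dist(F,\Lambda\setminus F)$, and the anisotropic map $\varphi_{I_H}$ contracts distances by at least $S(H)$, so the claimed bound still holds.
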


\begin{proof} Since $\Lambda$ is non-degenerated and possesses trivial points, by Lemma \ref{lem:island},
  there is a clopen subset $F$ of $\Lambda$ such that $F\cap \partial [0,1]^d=\emptyset$.
  Assume that  $F$  is the union  of $L$ number of  $m_1$-th cylinders.
  Let $U$ be a $m_1$-cylinder in $F$ with the maximal measure in $\mu$, then
$$\mu(U)\geq \mu(F)/L.$$

Let $W$ be a $k$-th cylinder of $\Lambda$ and denote $W'=\pi _{d-1}\left( W \right)$. Let $\delta\leq S(W)$.
Since
$$
h_W\left( \delta \right) \le 3^d N_W(\delta)\asymp \mu _d\left( W \right) \delta ^{-\alpha _d},
$$
we need only to consider the lower bound estimate of $h_W(\delta)$.

That $\mu_{d-1}$  is a component-counting measure implies that
there is a constant $M$ such that
$$
h_{W'}\left( \delta \right) \geq M \mu _{d-1}\left( W' \right) \delta ^{-\alpha _{d-1}},\ \quad \text{ for all } \delta <S(W').
$$
(We remark that we set the threshold $\delta_0(W')=S(W')$ here.)

Let $\kappa=M/2$ and let $m_0$ be the constant in Lemma \ref{lem:boundary}. Set
$\delta'= \delta/(r_*)^{m_0+m_1}$. Let $\mathcal{V}_{\delta'}$  be the $\delta'$-blocking of $\Lambda$.

Pick $H\in \mathcal{V}_{\delta'}$ and $H\subset W$.
Let $f_H$ be the affine map such that  $H=f_H(\Lambda)$, then $H_F:=f_H(F)$ is an island of $H$ in the sense that
$f_H(F)$ is a clopen subset of $H$, and $f_H(F)$ does not intersect $f_H(\partial [0,1]^d)$.
Clearly
$$
{\mu(H_F)}= \mu(F){\mu(H)}.
$$

 Now we estimate $h_{H_F}(\delta)$.  Denote
 $$U_H=f_H(U) \text{ and } U_H'=\pi_{d-1}(U_H).$$
 First, since $\pi_{d-1}$ is contractive, by Lemma \ref{lem:box}(iii),  we have
 $$
 h_{U_H}(\delta)\geq h_{U_H'}(\delta).
 $$
Notice that
$$\delta = (r_*)^{m_0+m_1} \delta'\leq (r_*)^{m_0+m_1} S(H)\leq   (r_*)^{m_0} S(U_H)
\leq   (r_*)^{m_0} S(U_H').
$$
On one hand, since $\mu_{d-1}$  is a component-counting measure, we have
\begin{equation}\label{eq:Uprime}
h_{U_H'}(\delta)   \geq  M \mu_{d-1}(U_H') \delta ^{-\alpha_{d-1}}.
\end{equation}
On the other hand, by Lemma \ref{lem:boundary}, we have
$$
h_{U_H'}^{b}( \delta) \le \kappa \mu_{d-1}  ( U_H'  ) \delta ^{-\alpha_{d-1}}=\frac{M}{2} \mu_{d-1}  ( U_H'  ) \delta ^{-\alpha_{d-1}}.
$$
This together with \eqref{eq:Uprime} imply that
\begin{equation}\label{eq:key}
h_{U_H'}^{i}(\delta) \ge  \frac{M}{2}\mu_{d-1}(U_H') \delta ^{-\alpha_{d-1}}.
\end{equation}
First, we show that $h_{H_F}(\delta)$ is no less than the left hand side of \eqref{eq:key}:
\begin{equation}\label{eq:left}
h_{H_F}(\delta)\geq h_{\pi_{d-1}(H_F)}(\delta)\geq h_{U_H'}^{i}(\delta).
\end{equation}
Secondly, we show that $\mu(H)$ is comparable with the right hand side of \eqref{eq:key}.
Since
\begin{equation}\label{eq:tree}
\delta = (r_*)^{m_0+m_1} \delta'\geq (r_*)^{m_0+m_1+1} S(H)\geq (r_*)^{m_0+m_1+1}  S(U_H),
\end{equation}
we have
\begin{equation}\label{eq:right}
\begin{array}{rl}
 \mu_{d-1}(U_H') \delta ^{-\alpha_{d-1}} &=\mu(U_H)\left (\frac{\delta}{S(U_H)}\right )^{\beta_{d}}\delta ^{-\alpha_{d}} \\
&\geq \mu(U_H) r_*^{(m_0+m_1+1)}\delta ^{-\alpha_{d}} \quad \text{(By \eqref{eq:tree} and using the fact $\beta_{d}\leq 1$.)}\\
&\geq  r_*^{(m_0+m_1+1)}\frac{ \mu(H_F)}{L} \delta ^{-\alpha_{d}}\\
&=  r_*^{(m_0+m_1+1)}\frac{\mu(F) }{L} \mu(H)\delta ^{-\alpha_{d}}.
\end{array}
\end{equation}
Therefore,
$$
\begin{array}{rlr}
h_W ( \delta  ) & \ge \sum_{H\in \SV_{\delta'} \text{ and } H\subset W} h_{H_F}(\delta)
&\text{(Since $H_F$ are `islands'.)}\\
& \geq \sum_{H\in \SV_{\delta'} \text{ and } H\subset W} h_{U_H'}^{i}(\delta)&\text{(By equation \eqref{eq:left}.)}\\
& \geq \sum_{H\in \SV_{\delta'} \text{ and } H\subset W}  \frac{M}{2}\mu_{d-1}(U_H') \delta ^{-\alpha_{d-1}} &\text{(By equation \eqref{eq:key}.)}\\
& \geq \sum_{H\in \SV_{\delta'} \text{ and } H\subset W} \frac{Mr_*^{(m_0+m_1+1)}\mu(F)}{2L}\mu\left( H \right) \delta ^{-\alpha} &\text{(By equation \eqref{eq:right}.)}\\
& =  \frac{Mr_*^{(m_0+m_1+1)}\mu(F)}{2L}\mu\left( W \right) \delta ^{-\alpha}.
\end{array}
$$
 This proves that $\mu$ is a component-counting measure and
 the threshold $\delta_0(W)$ can be set to be $S(W)$.
\end{proof}

\begin{proof}[\textbf{Proof of Theorem  \ref{thm:affine}}]  The second assertion in the theorem is proved in Corollary \ref{cor:good}. In the following, we show that
 (i)$\Rightarrow$(ii) $\Rightarrow$ (iii)$ \Rightarrow$ (i).

(i)$\Rightarrow$(ii) is trivial, see Remark \ref{rem:homo}.

(ii)$\Rightarrow$(iii) holds by  Corollary  \ref{cor:good}.

(iii)$\Rightarrow$(i): First, $\mu_1$  is a component-counting measure of $\Lambda_1=\pi_1(\Lambda)$ by Theorem \ref{thm:conformal}.
Then by Theorem \ref{thm:grow-2} and induction, we conclude that
    $\mu_j$ is a component-counting measure of $\Lambda_j$
    for each $1\le j\le d$.

    That (iii)$\Leftrightarrow (iv)$ is obvious. The theorem is proved.
\end{proof}

\begin{proof}[\textbf{Proof of Theorem  \ref{thm:dropp}}]
By Theorem \ref{thm:affine}, if $\Lambda$ does not satisfy the maximal power law,
then there exists $1\leq j\leq d$ such that $\Lambda_j$ contains no inner trivial points.
Now the theorem is  a direct consequence of Corollary  \ref{cor:good}.
\end{proof}


\begin{thebibliography}{99}
\addcontentsline{toc}{chapter}{Bibliography}

\bibitem{Baranski07}
K. Bara\'nski, \emph{Hausdorff dimension of the limit sets of some planar geometric constructions}, Advances in Mathematics, 2007, \textbf{210}(1): 215-245.

\bibitem{BK21} A. Banaji and I. Kolossvary, Intermidiate dimensions of Bedford-McMullen carpets with applications to Lipschitz equivalence. 2021.

\bibitem{Bar07} J. Barral and M. Mensi, \emph{Gibbs measures on self-affine Sierpi\'nski carpets and their singularity spectrum}, Ergod. Th. \& Dynam. Sys., 2007, \textbf{27}(5): 1419-1443.

\bibitem{Bed84}
T. Bedford, \emph{Crinkly curves, Markov partitions and dimensions}, phD Thesis, University of Warwick, 1984.

\bibitem{BT54}
A.S. Besicovitch and S.J. Taylor, \emph{On the complementary intervals of a linear closed set of zero Lebesgue measure}, J. London Math. Soc. \textbf{29} (1954), 449-459.

\bibitem{Das16}
T. Das, D. Simmons, \emph{The Hausdorff and dynamical dimensions of self-affine sponges: a dimension gap result}, Inventiones Mathematicae, 2016, \textbf{2}: 1-50.



\bibitem{DWX15}
J. Deng, Q. Wang, L.F. Xi, \emph{Gap sequences of self-conformal sets}, Arch. Math., \textbf{104} (2015), 391-400.



 \bibitem{Falconer90}
K.J. Falconer, \emph{Fractal geometry: mathematical foundations and applications}, John Wiley \& Sons, 1990.

\bibitem{Fal95}
K.J. Falconer, \emph{On the Minkowski measurability of fractals}, Proc. Amer. Math. Soc, 1995, \textbf{123}: 1115-1124.




\bibitem{FW05}
D.J. Feng,  Y. Wang, \emph{A Class of Self-Affine Sets and Self-Affine Measures}, Journal of Fourier Analysis \& Applications, 2005, \textbf{11}(1): 107-124.


\bibitem{HR21}
L.Y. Huang, H. Rao, \emph{A dimension drop phenomenon of fractal cubes}, J. Math. Anal. Appl., \textbf{497} (2021), 11pp.

\bibitem{HRWX} L.Y. Huang, H. Rao, Z.Y. Wen and Y.L. Xu, Box-counting measure of metric spaces and its applications, Preprint 2022.


\bibitem{HZ22}
L.Y. Huang, Y. Zhang, A dichotomy of strong open set condition of self-conformal sets, Fractals, \textbf{30}(10) (2022), 6 pp.


\bibitem{KP96}
R. Kenyon, Y. Peres, \emph{Measures of full dimension on affine-invariant sets}, Egodic Theory and Dynamical Systems, 1996, \textbf{16}: 307-323.

\bibitem{King95}
J.F. King, \emph{The Singularity Spectrum for General Sierpi\'nski Carpets}. Advances in Mathematics, 1995, \textbf{116}(1): 1-11.

\bibitem{Lalley92}
S.P. Lalley, D. Gatzouras, \emph{Hausdorff and box dimensions of certain self-affine fractals}, Indiana Univ. Math. J., 1992, \textbf{41}(2): 533-568.


\bibitem{LP93}
M.L. Lapidus, C. Pomerance, \emph{The Riemann zeta-function and the one-dimensional Weyl-Berry conjecture for fractal drums}, Proc. Lond. Math. Soc, \textbf{66} (1993), 41-69.

\bibitem{LRY99}
K.S. Lau, H. Rao, Y.L. Ye, \emph{Corrigendum: ``Iterated function system and Ruelle operator''}, J. Math. Anal. Appl., \textbf{262} (2001), 446-451.

\bibitem{LLM13}
B.M. Li, W.X. Li, J.J. Miao, \emph{Lipschitz equivalence of McMullen sets}, Fractals, 2013, \textbf{21}(3 \& 4), 1350022, 11 pages.

\bibitem{LMR20} Z. Liang, J.J. Miao, H. J. Ruan, \emph{Gap sequences and Topological properties of Bedford-McMullen sets},
Preprint (2021), (arXiv:2003.08262 [math.PH]).

\bibitem{LR18}
Z. Liang, H. J. Ruan, \emph{Gap sequences of fractal squares}, J. Math. Anal. Appl. \textbf{472} (2019) 1475-1486.







\bibitem{JR11}
T. Jordan, M. Rams, \emph{Multifractal analysis for Bedford-McMullen carpets}, Math. Proc. Cambridge Philos. Soc., 2011, \textbf{150}: 147-156.

\bibitem{MM11}
J.M. Mackay, \emph{Assouad dimension of self-affine carpets}. (English summary) Conformal Geometry \& Dynamics, 2011, \textbf{15}(12): 177-187.

\bibitem{Mc84}
C. McMullen, \emph{The Hausdorff dimension of Sierpi$\acute{\text{n}}$ski carpets}. Nagoya Math. J., 1984, \textbf{966}: 1-9.



\bibitem{MXX17}
J.J. Miao, L.F. Xi, Y. Xiong, \emph{Gap sequences of McMullen sets}, Proc. Amer. Math. Soc., \textbf{145} (2017), 1629-1637.


\bibitem{Olsen07}
L. Olsen, \emph{Symbolic and geometrical local dimensions of self-affine multifractal Sierpi$\acute{\text{n}}$ski sponges in $\mathbb{R}^d$}, Stochastics and Dynamics, 2007, \textbf{7}(01): 37-51.

\bibitem{Patzschke97}
N. Patzschke, \emph{Self-conformal multifractal measures}, Adv. Appl. Math., \textbf{19} (1997), 486-513.

\bibitem{Peres01}
Y. Peres, M. Rams, K. Simon, B. Solomyak, \emph{Equivalence of positive Hausdorff measure and the open set condition for self-conformal sets}, Proc. Amer. Math. Soc., \textbf{129}(2001), 2689-2699.

\bibitem{Peres94}
Y. Peres, \emph{The self-affine carpets of McMullen and Bedford have infinite Hausdorff measure}, Math. Proc. Cambridge
Philos. Soc., 1994, \textbf{116}: 513-526.


\bibitem{RRY08}
H. Rao, H.J. Ruan, Y.M. Yang, \emph{Gap sequence, Lipschitz equivalence and box dimension of fractal sets}, Nonlinearity, 2008, \textbf{21}: 1339-1347.

\bibitem{Rao19}
H. Rao, Y.M. Yang, Y. Zhang, \emph{The bi-Lipschitz
classification  of Bedford-McMullen carpets (I): Invariance of multifractal spectrum and measure preserving property}, 2019, Preprint (arXiv:2005.07451 [math.DS]).

\bibitem{Reeve10}
H.W. Reeve, \emph{Multifractal analysis for Birkhoff averages on Lalley-Gatzouras repellers}, Fundamenta Mathematicae, 2010, \textbf{212} (1).


\bibitem{YZ20}
Y.M. Yang, Y. Zhang, \emph{Lipschitz classification of  Bedford-McMullen carpets with uniform horizontal fibers}, J. Math.  Anal.  Appl., 2020, \textbf{495}(2): 124742.


\bibitem{ZH22}
Y. Zhang, L.Y. Huang, \emph{Relations between topological and metrical properties of self-affine Sierpi$\acute{\text{n}}$ski sponges}, J. Math. Anal. Appl., \textbf{514} (2022), https://doi.org/10.1016/j.jmaa.2022.126295.


\bibitem{ZX22}
Y.F. Zhang, Y.L. Xu, \emph{Dimension drop of connected part of slicing self-affine sponges}, J. Math. Anal. Appl., 2022, \textbf{509}, 13 pp.
\end{thebibliography}
\end{document}